\documentclass[a4pape,reqno,12pt,oneside]{amsart}
\usepackage{a4wide}
\usepackage[english]{babel}    % para texto em Inglês
 
%\usepackage[latin1]{inputenc}   % para acentuação em Português IsoLatin1
%\usepackage[utf8]{inputenc}     % para acentuação em Português UTF-8

% Atenção: O presente arquivo usa a codificação IsoLatin1. Para
% utilizar a codificação UTF-8, converter o arquivo usando seu
% programa favorito, então troque o argumento do inputenc 
% adequadamente.
\usepackage{cite}
\usepackage{comment}
\usepackage{graphics}
\usepackage{verbatim}
\usepackage{subfigure}
\usepackage{graphicx}
\usepackage{epsfig}
\usepackage{hyperref}
\hypersetup{colorlinks,
linkcolor=blue,
filecolor=green,
urlcolor=blue,
citecolor=blue}
\usepackage{enumitem} 
\usepackage{framed}
\usepackage{psfrag}
\usepackage{tikz}
\usepackage{tkz-berge}
\usetikzlibrary{arrows,snakes,backgrounds,trees}
\usetikzlibrary{matrix,decorations.pathreplacing,positioning}
\usepackage{stmaryrd}
\usepackage[centertags]{amsmath}
\usepackage{indentfirst,amsfonts,amssymb,amsthm,newlfont}

\def\A{\mathcal{A}}

\newtheorem{lemma}{Lemma}[section]
\newtheorem{corollary}[lemma]{Corollary}
\newtheorem{theorem}[lemma]{Theorem}
\newtheorem{prop}[lemma]{Proposition}

\newtheorem{definition}[lemma]{Definition}
\newtheorem{example}[lemma]{Example}

\begin{document}

\subjclass[2010]{05C25, 17D92, 05C81}
\keywords{Evolution Algebra, Genetic Algebra, Isomorphism, Graph, Random Walk} 

%********************************************************

\title[Singular graphs with non-isomorphic evolution algebras]{On some singular graphs with non-isomorphic associated evolution algebras}

\author[Paula Cadavid]{Paula Cadavid}
\address{Paula Cadavid: Departamento de Matemática - Universidade Federal Rural de Pernambuco, Rua Dom Manuel de Medeiros, s/n, Dois Irm\~aos, CEP 52171-900, Recife/PE, Brazil}
\email{paula.cadavid@ufrpe.br}

\author[Mary Luz Rodi\~no Montoya]{Mary Luz Rodi\~no Montoya}
\address{Mary Luz Rodi\~no Montoya: Instituto de Matem\'aticas - Universidad de Antioquia, Calle 67 N$^{\circ}$ 53-108, Medell\'in, Colombia}
\email{mary.rodino@udea.edu.co}

\author[Pablo M. Rodriguez ]{Pablo M. Rodriguez}
\address{Pablo M. Rodriguez: Centro de ciências Exatas e da Natureza - Universidade Federal de Pernambuco, Av. Prof. Moraes Rego, 1235, Cidade Universit\'aria, Recife/PE, Brazil.}
\email{pablo@de.ufpe.br}

\author[Sebastian J. Vidal]{Sebastian J. Vidal}
\address{Sebastian J. Vidal: Departamento de Matem\'atica, Facultad de Ingenier\'ia, Universidad Nacional de la Patagonia ``San Juan Bosco'', Km 4, CP 9000, Comodoro Rivadavia, Chubut, Argentina.}
\email{svidal@unpata.edu.ar}

\begin{abstract}

A connected graph can be associated with two distinct evolution algebras. In the first case, the structural matrix is the adjacency matrix of the graph itself. In the second case, the structural matrix is the transition probabilities matrix of the symmetric random walk on the same graph. It is well-known that, for a non-singular graph, both evolution algebras are isomorphic if, and only if, the graph is regular or biregular. Moreover, through examples and partial results, it has been conjectured that the same result remains true for singular graphs. The purpose of this work is to provide new examples supporting this conjecture. 

\end{abstract}
\maketitle

%********************************************************
\section{Introduction}

In this work, we advance the study of evolution algebras associated with graphs. These genetic algebras, introduced by \cite{tian,tv}, are defined as follows.

\begin{definition}\label{def:evolalg}
Let $\A:=(\A,\cdot\,)$ be an algebra over a field $\mathbb{K}$. We say that $\A$ is an evolution algebra if it admits a basis $S:=\{e_i \}_{i \in \Lambda}$, such that
\begin{equation}\label{eq:ea}
e_i \cdot e_i = \sum_{k\in \Lambda} c_{ik} e_k, \text{for any }i\in \Lambda, \text{ and }
e_i \cdot e_j =0, \text{if }i\neq j.
\end{equation} 
\end{definition}

The scalars $c_{ik}\in \mathbb{K}$ are referred to as the structure constants of the algebra $\mathcal{A}$ relative to the basis $S$. A basis $S$ that satisfies \eqref{eq:ea} is known as a natural basis of the algebra $\mathcal{A}$. We point out that this definition assumes that $\Lambda$ is a Hamel basis; which implies that the sum in \eqref{eq:ea} can have only a finite number of nonzero terms. Whether $c_{ik} \in [0,1]$, for any $i,k\in \Lambda$, and $\sum_{k\in \Lambda} c_{ik}=1$, for any $i\in \Lambda$, then $\A$ is called a Markov evolution algebra. This name is due to a correspondence between $\A$ and a discrete-time Markov chain $(X_n)_{n\geq 0}$ with state space $\{x_1,x_2,\ldots,x_n,\ldots\}$ and transition probabilities given by $\nonumber c_{ik}:=\mathbb{P}(X_{n+1}=x_k|X_{n}=x_i),$ for $i,k\in \mathbb{N}_0$, and for any $n\in \mathbb{N}$, where $\mathbb{N}_0:=\mathbb{N}\setminus \{0\}$. This correspondence is maybe one of the most interesting of the theory. Each state of the Markov chain is identified with a generator of $S$. The first reference discussing the interplay between evolution algebras and Markov chains is \cite[Chapter 4]{tian}, where many well-known results coming from Markov chains are stated in the language of Markov evolution algebras. See also \cite{irene,SPP-indian} and the references therein for a review of recent results related Markov evolution algebras. In addition we recommend \cite{ceballosetal} for a recent review of general results about evolution algebras.

In this work we focus on the connection of evolution algebras and graphs. The first definitions of evolution algebras associated to graphs have been formalized by \cite{tian} and studied later by \cite{PMP,PMP1,PMP2,reis,nunez1,nunez2}. We refer the reader also to \cite{PMPY,Elduque/Labra/2015,Elduque/Labra/2019,SPP-last} for another look in the interplay between evolution algebras and graphs. As we shall see later, there are two natural ways of associating an evolution algebra to a given graph. One of them is obtained whether the structure matrix is the adjacency matrix of the graph. The second one is obtained if we consider as the structure matrix the transition matrix of the symmetric random walk on the same graph. One of the research problems suggested by \cite{tian}, and reinforced later by \cite{tian2}, was to understand the connection between both algebras given a fixed graph. The first results related to this problem appeared in \cite{PMP,PMP1}, where the authors shown for a non-singular graph that both evolution algebras are isomorphic if, and only if, the graph is regular or biregular. Moreover, through examples and partial results they conjectured that the same result remains true for singular graphs. The purpose of this work is to provide new non-trivial examples supporting this conjecture. The paper is organized as follows. Section 2 is devoted to the preliminaries of our work. The main notations, definitions, and known results are stated there. In Section 3 we present three examples of non-regular nor biregular graphs satisfying the conjecture of \cite{PMP2}. More precisely, we show for a family of caterpillar trees, a family of tadpole graphs and a twin-free graph, that the respective evolution algebras are not isomorphic between them.

\section{Evolution algebras associated to a finite graph}
\subsection{Notation of Graph Theory}
We start with some basic notation of Graph Theory. The reader who is familiar with graphs can skip this subsection during the reading of the paper. 

\subsubsection{Basic notation} A finite graph $G$ with $n$ vertices is a pair $(V,E)$ where $V:=\{1,\ldots,n\}$ is the set of vertices and $E\subseteq V \times V$ is the set of edges. If $(i,j)\in E$ we say that $i$ and $j$ are neighbors; we denote the set of neighbors of vertex $i$ by $\mathcal{N}(i)$ and the cardinality of this set by $\deg(i)$. A leaf vertex (also pendant vertex) is a vertex with degree one. If $U\subseteq V$ we use the notation $U^c:=V\setminus U$ and $\mathcal{N}(U)=\{v\in V: v\in \mathcal{N}(u)  \text{ for some } u\in U\}$. The adjacency matrix of a graph $G$ with $n$ vertices is an $n\times n$ symmetric matrix denoted by $A:=A(G)=(a_{ij})$ such that $a_{ij}=1$ if $i$ and $j$ are neighbors and $0$, otherwise. We say that a finite graph is singular if its adjacency matrix $A$ is a singular matrix (or equivalently, if $\det A =0$), otherwise we say that the graph is  non-singular. All the graphs we consider in this work are connected; i.e. for any $i,j\in V$ there exists a positive integer $\ell$ and a sequence of vertices $\gamma=(i_0,i_1,i_2,\ldots,i_\ell)$ such that $i_0=i$, $i_\ell=j$ and $(i_k,i_{k+1})\in E$ for all $k\in\{0,1,\ldots,\ell-1\}$. For simplicity, we consider only graphs which are simple; i.e. without multiple edges or loops.

\subsubsection{Twin vertices and twin partition}

We say that two vertices $i$ and $j$ are twins if they have the same set of neighbors; i.e. $\mathcal{N}(i)=\mathcal{N}(j)$. We notice that by defining the relation $\sim_{t}$ on the set of vertices $V$ by $i\sim_{t} j$ whether $i$ and $j$ are twins, then $\sim_{t}$ is an equivalence relation. An equivalence class of the twin relation is called a twin class. In other words, the twin class of a vertex $i$ is the set $\{j\in V:i \sim_{t} j\}$. 
%The set of all twin classes of $G$ is denoted by $\Pi(G)$ and it is referred to as the {\it twin partition} of $G$. A graph is  twin-free if it has no twins. For the sake of simplicity we use the nomenclature of twins but, since we consider graphs without loops, it coincides with the concept of non-adjacency twins, or false twins, depending of the reference.

\subsection{The evolution algebras induced by a finite graph}
An evolution algebra associated to a graph $G$ is defined by \cite[Section 6.1]{tian} as follows.

\smallskip
\begin{definition}\label{def:eagraph}
Let $G=(V,E)$ be a graph with $n$ vertices and adjacency matrix $A=(a_{ij})$. The evolution algebra associated to $G$ is the algebra $\A(G)$ with natural basis $S=\{e_1,\ldots,e_n\}$, and relations
\begin{equation*}
e_i \cdot e_i = \sum_{k=1}^n a_{ik} e_k, \text{for  }i \in  \{1,\ldots,n\} \,\, \text{ and } \,\, e_i \cdot e_j =0,\text{ if }i\neq j.
\end{equation*}
\end{definition}

\begin{example}\label{exa:tadpole41}
The $(n,m)$-tadpole graph, denoted by $T_{n,m}$, is the graph obtained by joining a cycle graph $C_n$ to a path graph $P_m$ with a bridge. The graph $T_{4,1}$, and its adjacency matrix, is represented in Fig.\ref{FIG:graph1}. The evolution algebra $\A(T_{4,1})$ has a natural basis $S=\{e_1,e_2,e_3,e_4,e_5\}$, and relations given by: 

$$
\begin{array}{rclrcl}
e_1^2 &=& e_2 + e_4, & e_2^2 &=& e_1 + e_3,  \\[.2cm]
e_3^2 &=& e_2 + e_4, & e_4^2 &=& e_1 + e_3+e_5,  \\[.2cm]
e_5^2 &=& e_4, & e_i \cdot e_j &=&0,\text{ if }i\neq j.
\end{array}
$$

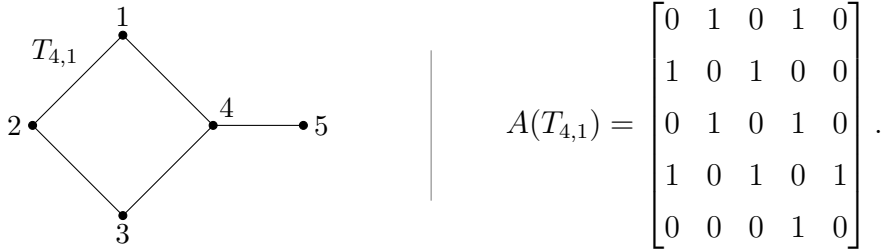
\begin{figure}[h!]
\centering
\begin{tikzpicture}
    \node at (0,0){
\begin{tikzpicture}[scale=0.6]

% reta
\draw (0,0) -- (2,0);
%\draw (-2,0) -- (-2,1.5);
%\draw (-2,0) -- (-2,-1.5);

\draw (0,0) -- (-2,-2);
\draw (0,0) -- (-2,2);

\draw (-2,-2) -- (-4,0);
\draw (-2,2) -- (-4,0);
% particula
\filldraw [black] (-4,0) circle (2.5pt);  \draw (-4.4,0) node {\small$2$};
\filldraw [black] (0,0) circle (2.5pt);  \draw (0.3,0.4) node {\small$4$};
\filldraw [black] (2,0) circle (2.5pt);  \draw (2.4,0) node {\small$5$};
\filldraw [black] (-2,2) circle (2.5pt);  \draw (-2,2.4) node {\small$1$};
\filldraw [black] (-2,-2) circle (2.5pt);  \draw (-2,-2.4) node {\small$3$};
\draw (-3.5,1.6) node {\small $T_{4,1}$};
\end{tikzpicture}};
\node at (7,0){
\begin{tikzpicture}
\node at (0,0) {$A(T_{4,1})=\displaystyle
\begin{bmatrix}
0 & 1 & 0 & 1 & 0\\[.2cm]
1 & 0 & 1 & 0 & 0\\[.2cm]
0 & 1 & 0 & 1 & 0\\[.2cm]
1 & 0 & 1 & 0 & 1\\[.2cm]
0 & 0 & 0 & 1 & 0
\end{bmatrix}.$};
\end{tikzpicture}
};
\draw[gray,thin] (3.5,-1) -- (3.5,1);
\end{tikzpicture}
\caption{The tadpole graph $T_{4,1}$ and its adjacency matrix.}\label{FIG:graph1}
\end{figure}

\end{example}

Another evolution algebra associated to a given graph $G=(V,E)$ is the one induced by the symmetric random walk on $G$, which is a discrete time Markov chain $(X_\ell)_{\ell\geq 0}$ with states space equals to $V$ and transition probabilities given by $\mathbb{P}(X_{\ell+1}=k|X_{\ell}=i)=a_{ik}/\deg(i),$ where $i,k\in V$, $\ell\in \mathbb{N}$, and $\deg(i)=\sum_{k\in V} a_{ik}$ is the number of neighbors of vertex $i$. More precisely, since the random walk on $G$ is a discrete-time Markov chain, we can associate to $G$ its related Markov evolution algebra.

\smallskip
\begin{definition}
Let $G=(V,E)$ be a graph with $n$ vertices and adjacency matrix given by $A=(a_{ij})$. We define the evolution algebra associated to the symmetric random walk on $G$ as the algebra $\A_{RW}(G)$ with natural basis $S=\{e_1,\ldots,e_n\}$, and relations given by
\begin{equation*}
e_i \cdot e_i = \sum_{k=1}^{n}\left( \frac{a_{ik}}{\deg(i)}\right)e_k, \text{ for }i  \in \{1,\ldots,n\}  \text{ and } e_i \cdot e_j =0, \text{ if } i\neq j.
\end{equation*}
%\begin{equation*}
%\begin{array}{ll}\displaystyle
%e_i \cdot e_i = \sum_{k=1}^{n}\left( \frac{a_{ik}}{\deg(i)}\right)e_k,&\text{ for }i  \in \{1,\ldots,n\},
%\end{array}
%\end{equation*}
%\noindent
%and $e_i \cdot e_j =0, \text{ if } i\neq j.$
\end{definition}

\begin{example}
Consider the Tadpole graph $T_{4,1}$ of Example \ref{exa:tadpole41}. In this case the evolution algebra $\A_{RW}(T_{4,1})$ has natural basis $S=\{e_1,e_2,e_3,e_4,e_5\}$, and relations given by:
$$
\begin{array}{rclrcl}
e_1^2 &=& \frac{1}{2}\left(e_2 + e_4\right), &  e_2^2 &=& \frac{1}{2}\left(e_1 + e_3\right),\\[.2cm]
e_3^2 &=& \frac{1}{2}\left(e_2 + e_4\right), & e_4^2 &=& \frac{1}{3}\left(e_1 + e_3 + e_5\right),\\[.2cm]
e_5^2 &=& e_4, & e_i \cdot e_j &=&0,\text{ if }i\neq j.
\end{array}
$$
\end{example}

\subsection{On the existence of isomorphisms between $\A(G)$ and $\A_{RW}(G)$} 
One question of interest when studying evolution algebras related to graphs has been what is the connection between $\A_{RW}(G)$ and $\A(G)$ for a given graph $G$. This question has been an open problem, stated initially by \cite{tian,tian2}, which has been addressed recently by \cite{PMP,PMP1} mainly for the case of finite graphs. The strategy is the statement of conditions to guarantee the existence of isomorphisms between these algebras.  These works, in turn, motivated the analysis done by \cite{SPP-siberian} for a Hilbert evolution algebra, which is the extension of the concept of evolution algebra proposed by \cite{SPP-indian} in a framework of Hilbert spaces. 

\smallskip

We recall that, given two  $\mathbb{K}$-algebras $\mathcal{A}$ and $\mathcal{B}$, a $\mathbb{K}$-linear map $f:\mathcal{A} \longrightarrow \mathcal{B}$ is an algebra homomorphism if $f(xy)=f(x)f(y)$, for all $x,y \in \mathcal{A}$. If $f$ is a bijection, it is called an isomorphism. In this case, we write $\A\cong \mathcal{B}$.

It has been proved by \cite{PMP} that $\A_{RW}(G) \cong \A(G)$ provided $G$ is well-behaved in some sense. We say that $G=(V,E)$ is a $k$-regular graph if $\deg(i) = k$ for any $i\in V$ and some positive integer $k$. We say that $G$ is a bipartite graph if its vertices can be divided into two disjoint sets, say $V_1$ and $V_2$, such that every edge connects a vertex in $V_1$ to one in $V_2$. Moreover, we say that $G$ is a biregular graph if it is a bipartite graph $G=(V_1,V_2,E)$ for which every two vertices on the same side of the given partition have the same degree as each other. In this case, if the degree of the vertices in $V_1$ is $k_1$ and the degree of the vertices in $V_2$ is $k_2$, then we say that $G$ is a $(k_{1},k_2)$-biregular graph. 

\begin{theorem}\label{theo:criterio} \cite[Theorem 2.3]{PMP1} Let $G$ be a finite non-singular graph. $\A_{RW}(G)\cong \A(G)$ if, and only if, $G$ is a regular or a biregular graph. Moreover, if $\A_{RW}(G)\ncong \A(G)$ then the only homomorphism between them is the null map.
\end{theorem}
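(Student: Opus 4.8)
The plan is to fix a $\mathbb{K}$-linear map $f\colon \A_{RW}(G)\to \A(G)$ and encode it by the matrix $M=(m_{ji})$ determined by $f(e_i)=\sum_{j} m_{ji} e_j$. Writing $P=(a_{ik}/\deg(i))$ for the structure matrix of $\A_{RW}(G)$ and recalling that the one of $\A(G)$ is the symmetric matrix $A$, the homomorphism identities $f(e_i\cdot e_i)=f(e_i)\cdot f(e_i)$ and $f(e_i\cdot e_j)=0$ for $i\ne j$ translate, after collecting the coefficient of each $e_k$, into two families of scalar equations: a ``diagonal'' system coming from the squares, and an ``orthogonality'' system $\sum_{l} m_{li}m_{lj}\,a_{lk}=0$ holding for all $k$ and all $i\ne j$. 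The first thing I would do is exploit the hypothesis: for fixed $i\ne j$ the vector $(m_{li}m_{lj})_l$ lies in the kernel of $A$, so non-singularity of $G$ forces $m_{li}m_{lj}=0$ for every $l$. Hence each row of $M$ has at most one non-zero entry; equivalently, the images $f(e_1),\dots,f(e_n)$ have pairwise disjoint supports. This observation, valid for \emph{every} homomorphism, is the structural engine of the whole argument.

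For the implication ($\Rightarrow$) I would assume $f$ is an isomorphism. Disjoint supports together with surjectivity force each image to be a non-zero multiple of a single, distinct basis vector, $f(e_i)=\lambda_i e_{\sigma(i)}$ with $\lambda_i\ne 0$ and $\sigma$ a permutation of $\{1,\dots,n\}$. Substituting this into the diagonal system, the vanishing pattern shows $\sigma$ is a graph automorphism, while the non-zero entries yield the edge relation $\lambda_k=\lambda_i^2\deg(i)$ for every $i\sim k$. The heart of the proof is the analysis of this relation, and I expect it to be the main obstacle. The key device is that the relation is invariant under two-step walks: if $i_0\sim i_1\sim i_2$ then $\lambda_{i_0}=\lambda_{i_1}^2\deg(i_1)=\lambda_{i_2}$, so $\lambda$ is constant along even walks. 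If $G$ is not bipartite, any two vertices are joined by walks of both parities, hence $\lambda$ is globally constant and $\lambda=\lambda^2\deg(i)$ gives $\deg(i)=1/\lambda$ for all $i$, so $G$ is regular. If $G$ is bipartite, $\lambda$ is constant on each side, say $\alpha$ and $\beta$, and the edge relation forces $\deg\equiv \beta/\alpha^2$ on one side and $\deg\equiv \alpha/\beta^2$ on the other, so $G$ is biregular.

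For ($\Leftarrow$) I would run the construction in reverse with $\sigma=\mathrm{id}$, solving $\lambda_k=\lambda_i^2\deg(i)$ explicitly: for a $d$-regular graph the constant $\lambda_i=1/d$ works, and for a $(k_1,k_2)$-biregular graph the two values $(k_1^2k_2)^{-1/3}$ on $V_1$ and $(k_1k_2^2)^{-1/3}$ on $V_2$ satisfy both edge equations. The associated diagonal map is invertible and, by the computation above, multiplicative, hence an isomorphism.

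Finally, for the ``moreover'' clause I would upgrade the structural engine. Let $f\ne 0$ be any homomorphism and put $T=\{i:f(e_i)\ne 0\}\ne\varnothing$. Non-singularity again gives $f(e_i)^2\ne 0$ for $i\in T$, and the square identity reads $f(e_i)^2=\deg(i)^{-1}\sum_{k\sim i}f(e_k)$; since the $f(e_k)$ have pairwise disjoint supports, a vanishing sum forces each summand to vanish. Thus $i\notin T$ implies every neighbour of $i$ lies in $T^c$, so $T^c$ is a union of connected components, and connectedness together with $T\ne\varnothing$ yields $T=V$. Now $f(e_1),\dots,f(e_n)$ are $n$ non-zero vectors with pairwise disjoint supports, hence linearly independent, so $f$ is bijective, i.e.\ an isomorphism. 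By the first part this forces $G$ to be regular or biregular; contrapositively, whenever $\A_{RW}(G)\ncong\A(G)$ the only homomorphism between them is the null map.
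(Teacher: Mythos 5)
Your proposal is correct — I checked each step — but note that this paper does not actually prove Theorem \ref{theo:criterio}: it quotes it from \cite[Theorem 2.3]{PMP1} and only indicates where the ingredients are established (\cite[Theorem 3.2(i)]{PMP}, \cite[Propositions 2.9 and 2.10]{PMP1}). Your argument is in effect a self-contained reconstruction along the same lines as the cited proof. The two systems of scalar equations you extract are exactly the identities \eqref{eq:propa} and \eqref{eq:propb} that the paper derives for arbitrary, possibly singular, graphs; the extra step that non-singularity buys — your ``structural engine'', namely that each Hadamard product $(m_{li}m_{lj})_l$ lies in $\ker A$ and hence vanishes, so the images $f(e_i)$ have pairwise disjoint supports — is precisely the ingredient the paper says is unavailable in the singular case and attributes to \cite{PMP1}. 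Your connectivity propagation in the ``moreover'' part is the same mechanism as the paper's Corollary \ref{coro:column0}, and the rest (the monomial form $f(e_i)=\lambda_i e_{\sigma(i)}$ with $\sigma$ a graph automorphism, the edge relation $\lambda_k=\lambda_i^2\deg(i)$, the parity/bipartiteness dichotomy, and the explicit diagonal isomorphism for the converse) matches the strategy of \cite{PMP,PMP1}. One caveat worth recording: the converse direction requires the scalars $1/d$, $(k_1^2k_2)^{-1/3}$ and $(k_1k_2^2)^{-1/3}$ to exist in the base field, so your proof — like the cited one — is really a proof over $\mathbb{R}$; over an arbitrary field $\mathbb{K}$ the ``if'' direction is not automatic.
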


In order to prove Theorem \ref{theo:criterio} it has been important to deal with non-singular matrices, see \cite{PMP1} for details. In such work, the authors proved on one hand that $\A_{RW}(G)\cong \A(G)$ provided $G$ is a regular or a biregular graph (see \cite[Theorem 3.2(i)]{PMP} and \cite[Proposition 2.9]{PMP1}), independently of $G$ being a non-singular graph. However, the proof of the reciprocal needs to ask the non-singularity of $G$ (see \cite[Proposition 2.10]{PMP1}). 

Although some partial results are true also for singular matrices, see \cite{PMP,PMP2}, new arguments must be developed to deal with the singular case. It is worth pointing out that recently, \cite[Theorem 2.2.6]{acevedo} pointed out that any non-singular biregular graph must be regular so although Theorem \ref{theo:criterio} should be stated in terms of regular graphs, we expect that for singular graphs regularity or biregularity is a necessary condition for the existence of isomorphisms between the respective evolution algebras.

In order to study this problem in singular graphs we suggest to consider those containing twins and to explore how the twin partition of the graph can help to obtain results. This has been useful to deal with other issues like the characterization of the derivations of an evolution algebra associated to a graph, see \cite{PMP2,reis} for more details. The analysis developed by \cite{PMP,PMP1} motivated the authors to conjecture that, like in the non-singular case, if $\A_{RW}(G)\ncong \A(G)$ then the only evolution homomorphism between them should be the null map. In this work we provide new non-trivial examples supporting this conjecture. The following auxiliary results will be useful for our task. Let us remember that if $G=(V,E)$ is a given graph and $f:\mathcal{A}_{RW}(G) \longrightarrow \mathcal{A}(G)$ is an algebra homomorphism, then the following condition holds for any $i,j\in V$: 

\begin{equation}\label{eq:condf}
f(e_i \cdot e_j) = f(e_i)\cdot f(e_j),\\[.2cm]
\end{equation}

\noindent
where the $e_ i$'s are the generators of the involved natural basis. Our first task will be to translate the previous condition in the language of the neighborhoods of the graph.

\smallskip

\begin{prop}
Let $G=(V,E)$ be a finite graph with adjacency matrix $A(G)=(a_{ij})$ and let $f:\mathcal{A}_{RW}(G) \longrightarrow \mathcal{A}(G)$ be a homomorphism such that 
\begin{equation}\label{eq:homo}
f(e_i) =\sum_{k\in V} t_{ik}e_k,
\end{equation}
for any $i\in V$. Then,
\begin{equation}\label{eq:propa}
\sum_{k\in \mathcal{N}(r)}t_{ik}t_{jk} =0,\text{ for all }r,i, j\in V\text{ with }i\neq j,
\end{equation}
and
\begin{equation}\label{eq:propb}
\sum_{k\in \mathcal{N}(r)} t_{ik}^2 = \sum_{\ell \in \mathcal{N}(i)}\frac{t_{\ell r}}{\deg(i)},\text{ for all }i,r\in V.
\end{equation}
\end{prop}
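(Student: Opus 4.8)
The plan is to read off both relations directly from the homomorphism identity \eqref{eq:condf}, expanding each side in terms of the defining relations of the two algebras and then comparing coefficients with respect to the natural basis $\{e_r\}_{r\in V}$ of the target algebra $\mathcal{A}(G)$. The key point is that the multiplication appearing \emph{inside} $f$ is the normalized random-walk product of $\mathcal{A}_{RW}(G)$, whereas the multiplication \emph{between the images} $f(e_i)$ and $f(e_j)$ is the adjacency product of $\mathcal{A}(G)$. I would split the argument into the two cases $i\neq j$ and $i=j$, which produce \eqref{eq:propa} and \eqref{eq:propb} respectively.

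First I would treat the case $i\neq j$. Here the left-hand side of \eqref{eq:condf} is $f(e_i\cdot e_j)=f(0)=0$, since distinct generators annihilate each other in $\mathcal{A}_{RW}(G)$. For the right-hand side I would substitute \eqref{eq:homo} and expand in $\mathcal{A}(G)$,
\[
f(e_i)\cdot f(e_j)=\sum_{k,\ell\in V} t_{ik}\,t_{j\ell}\,(e_k\cdot e_\ell)=\sum_{k\in V} t_{ik}\,t_{jk}\,(e_k\cdot e_k),
\]
where the orthogonality $e_k\cdot e_\ell=0$ for $k\neq\ell$ kills the cross terms. Replacing $e_k\cdot e_k=\sum_{m\in V}a_{km}e_m$ and interchanging the order of summation, the coefficient of each $e_m$ must vanish, giving $\sum_{k\in V} t_{ik}t_{jk}\,a_{km}=0$. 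Since $a_{km}=1$ exactly when $k\in\mathcal{N}(m)$, relabelling $m$ as $r$ yields \eqref{eq:propa} for all $r\in V$ and all $i\neq j$.

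Next I would handle $i=j$. For the left-hand side I use the random-walk relation $e_i\cdot e_i=\tfrac{1}{\deg(i)}\sum_{\ell\in\mathcal{N}(i)}e_\ell$ together with the linearity of $f$, so that the coefficient of $e_r$ in $f(e_i\cdot e_i)$ is $\tfrac{1}{\deg(i)}\sum_{\ell\in\mathcal{N}(i)}t_{\ell r}$. For the right-hand side, exactly as in the previous case, the coefficient of $e_r$ in $f(e_i)\cdot f(e_i)=\sum_{k\in V}t_{ik}^{2}\,(e_k\cdot e_k)$ is $\sum_{k\in V}t_{ik}^{2}\,a_{kr}=\sum_{k\in\mathcal{N}(r)}t_{ik}^{2}$. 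Equating the two coefficients and using linear independence of the natural basis gives \eqref{eq:propb}.

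The proof is essentially a bookkeeping exercise, so I do not expect a genuine obstacle; the only care required is to keep straight which of the two products is in play on each side and to convert the adjacency entries $a_{km}$ into the neighborhood-membership conditions that produce the restricted sums over $\mathcal{N}(r)$ and $\mathcal{N}(i)$. Linear independence of $\{e_r\}_{r\in V}$ then does all the remaining work of separating the scalar identities.
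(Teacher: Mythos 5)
Your proof is correct and follows essentially the same route as the paper's: both split into the cases $i\neq j$ and $i=j$, expand $f(e_i)\cdot f(e_j)$ (respectively $f(e_i)^2$ and $f(e_i^2)$) using the orthogonality of the natural basis and the defining relations of $\mathcal{A}(G)$ and $\mathcal{A}_{RW}(G)$, and then compare coefficients of the basis elements $e_r$. The only cosmetic difference is that you write out the double sum with cross terms and the conversion $a_{kr}=1 \Leftrightarrow k\in\mathcal{N}(r)$ explicitly, steps the paper leaves implicit.
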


\begin{proof}
Consider $i,j\in V$ with $i\neq j$. Since $e_i \cdot e_j =0$ we have from \eqref{eq:condf} that
\begin{equation}\label{eq:eqprop1}
0=f(e_i \cdot e_j )=f(e_i) \cdot f(e_j)=\left(\sum_{k\in V}t_{ik}e_k\right)\left(\sum_{k\in V}t_{jk}e_k\right)=\sum_{k\in V}t_{ik}t_{jk}e_k^2,
\end{equation}
but
\begin{equation}\label{eq:eqprop2}
\sum_{k\in V}t_{ik}t_{jk}e_k^2=\sum_{k\in V}t_{ik}t_{jk}\left\{\sum_{r\in V}a_{kr}e_r\right\}=\sum_{r\in V}\left\{\sum_{k\in V}t_{ik}t_{jk}a_{kr}\right\}e_r.
\end{equation}
By \eqref{eq:eqprop1} and \eqref{eq:eqprop2} we have
$$0=\sum_{r\in V}\left\{\sum_{k\in \mathcal{N}(r)}t_{ik}t_{jk}\right\}e_r,$$
which proves \eqref{eq:propa}. Now, let $i\in V$ and observe that

\begin{equation}\label{eq:prop2}
f(e_i^2)=f\left(\sum_{\ell \in \mathcal{N}(i)} \frac{1}{\deg(i)}e_{\ell}\right)=\sum_{\ell \in \mathcal{N}(i)} \frac{1}{\deg(i)}f(e_{\ell})=\sum_{r\in V}\left\{\sum_{\ell \in \mathcal{N}(i)} \frac{t_{\ell r}}{\deg(i)} \right\}e_r,%\sum_{\ell \in \mathcal{N}(i)} \frac{1}{\deg(i)}\left\{\sum_{r\in V} t_{\ell r} e_r\right\},
\end{equation}
while
\begin{equation}\label{eq:prop3}
f(e_i)^2 = \left(\sum_{r\in V}t_{ir}e_r\right)^2 = \sum_{r\in V}t_{ir}^2 e_r^2 = \sum_{r\in V}t_{ir}^2\left\{ \sum_{k\in \mathcal{N}(r)}e_{k}\right\}=\sum_{r\in V}\left\{\sum_{k\in \mathcal{N}(r)} t_{ik}^2\right\} e_r.
\end{equation}\vspace{0.2cm}

Therefore \eqref{eq:propb} is a consequence of \eqref{eq:condf}, \eqref{eq:prop2} and \eqref{eq:prop3}.
\end{proof}

In what follows we assume that if $f:\mathcal{A}_{RW}(G) \longrightarrow \mathcal{A}(G)$ is a homomorphism then, for any $i\in V$, $f(e_i)$ is given by \eqref{eq:homo}. As an immediate consequence of equations \eqref{eq:propa} and \eqref{eq:propb} we have the following corollaries useful for the analysis of some examples.

\begin{corollary}\label{coro:leaf}
Let $G=(V,E)$ be a finite graph and let $f:\mathcal{A}_{RW}(G) \longrightarrow \mathcal{A}(G)$ be a homomorphism. If $\ell\in V$ is a leaf vertex, with $\mathcal{N}(\ell)=\{k_{\ell}\}$, then $f$ satisfies the following conditions:
\begin{enumerate} [label=(\roman*)]
\item \label{eq:nr1} 
 $\displaystyle t_{i\, k_{\ell}}t_{j\,k_{\ell}}= 0$, for all $i,j\in V$ such that $i\neq j$.  
\item  \label{eq:nr2} $\displaystyle t_{i\,k_{\ell}}\neq 0$, for at most one $i\in V.$ 
\item \label{eq:nr3} If there exists $u\in V\setminus\{\ell\}$ such that $u\sim_{t} \ell$, then $\displaystyle t_{u \,k_{\ell}}=t_{\ell k_{\ell}}=t_{k_{\ell}\ell}=t_{k_{\ell}u} =0$. 
\end{enumerate}
\end{corollary}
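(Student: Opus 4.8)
The plan is to read off all three statements directly from the two identities \eqref{eq:propa} and \eqref{eq:propb} of the Proposition, exploiting that the hypothesis $\mathcal{N}(\ell)=\{k_\ell\}$ collapses every sum indexed by a neighbourhood of $\ell$ to a single term.

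For part \ref{eq:nr1}, I would simply instantiate \eqref{eq:propa} at $r=\ell$. Since $\mathcal{N}(\ell)=\{k_\ell\}$, the left-hand sum has the single summand $t_{i\,k_\ell}t_{j\,k_\ell}$, so \eqref{eq:propa} reads $t_{i\,k_\ell}t_{j\,k_\ell}=0$ for every pair $i\neq j$. Part \ref{eq:nr2} is then immediate: if two distinct indices $i\neq j$ both had $t_{i\,k_\ell}\neq 0$ and $t_{j\,k_\ell}\neq 0$, their product would be nonzero, contradicting \ref{eq:nr1}; hence the column indexed by $k_\ell$ has at most one nonzero entry.

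The substance is in part \ref{eq:nr3}. The first observation is that a twin $u$ of the leaf $\ell$ satisfies $\mathcal{N}(u)=\mathcal{N}(\ell)=\{k_\ell\}$, so $u$ is itself a leaf of degree one sharing the neighbour $k_\ell$. This lets me feed $i\in\{\ell,u\}$ into \eqref{eq:propb}, where $\deg(i)=1$ and the right-hand sum over $\mathcal{N}(i)$ reduces to the single term $t_{k_\ell\,r}$; taking in turn $r=\ell$ and $r=u$ (again single-term neighbourhoods on the left) yields the four scalar equations $t_{\ell\,k_\ell}^2=t_{k_\ell\,\ell}$, $t_{\ell\,k_\ell}^2=t_{k_\ell\,u}$, $t_{u\,k_\ell}^2=t_{k_\ell\,\ell}$ and $t_{u\,k_\ell}^2=t_{k_\ell\,u}$. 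In particular the four quantities $t_{\ell\,k_\ell}^2$, $t_{u\,k_\ell}^2$, $t_{k_\ell\,\ell}$ and $t_{k_\ell\,u}$ are all equal to one common value.

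To finish I would invoke \ref{eq:nr1} with the pair $(\ell,u)$, which is legitimate since $u\neq\ell$, obtaining $t_{\ell\,k_\ell}t_{u\,k_\ell}=0$; thus at least one of $t_{\ell\,k_\ell}$, $t_{u\,k_\ell}$ vanishes, forcing the common value above to be $0$. Since the four displayed quantities all equal this common value, and since in a field $x^2=0$ implies $x=0$, every one of $t_{u\,k_\ell}$, $t_{\ell\,k_\ell}$, $t_{k_\ell\,\ell}$, $t_{k_\ell\,u}$ must be zero, as claimed. I expect the only genuine obstacle to be bookkeeping: recognising that the twin is a leaf and choosing exactly the instances $i,r\in\{\ell,u\}$ of \eqref{eq:propb} so that both neighbourhood sums degenerate. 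Once that is set up, the conclusion is forced by the product relation from \ref{eq:nr1} together with the absence of nonzero nilpotents in a field.
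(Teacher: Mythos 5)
Your proposal is correct and follows essentially the same route as the paper's proof: parts (i) and (ii) are read off from \eqref{eq:propa} with $r=\ell$, and part (iii) is obtained by instantiating \eqref{eq:propb} with $r,i\in\{\ell,u\}$ (using that the twin $u$ is itself a leaf with the same neighbour $k_\ell$) and then forcing the common value to vanish via the product relation of part (i). The paper phrases that last step as $t_{k_\ell\,\ell}^2=(t_{\ell\,k_\ell}\,t_{u\,k_\ell})^2=0$, which is the same argument as your observation that one of the two factors must be zero.
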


\begin{proof}
  \ref{eq:nr1} and \ref{eq:nr2} are straightforward. For \ref{eq:nr3} note that from \eqref{eq:propb}, letting $r=i=\ell$ we get $t_{\ell\, k_{\ell}}^2=t_{k_{\ell}\,\ell}$, and letting $r=\ell$ and $i=u$ we get $t_{u\, k_{\ell}}^2=t_{k_{\ell}\,\ell}$. Therefore  $t_{k_{\ell}\,\ell}^2=(t_{\ell\, k_{\ell}}\,t_{u\, k_{\ell}})^2$.  Since $t_{\ell\, k_{\ell}}\,t_{u\, k_{\ell}}=0$, by \ref{eq:nr1}, it follows that $t_{k_{\ell}\,\ell}=0$. This in turn implies that $t_{u\, k_{\ell}}=t_{\ell\, k_{\ell}}=0$. Similarly, the condition $t_{k_{\ell}\,u} = 0$ can be established.
\end{proof}

\smallskip
\begin{corollary}\label{coro:leaf-twins} Let $G=(V,E)$ be a finite graph and let $f:\mathcal{A}_{RW}(G) \longrightarrow \mathcal{A}(G)$ be a homomorphism. Let $\ell, u \in V$ be leaf vertices, $\ell \not = u$, such  that $u\sim_{t} \ell$  and $\mathcal{N}(\ell)=\{k_{\ell}\}$. If $\omega\in V\setminus\{u,\ell\}$ is a leaf vertex with $\mathcal{N}(\omega)=\{k_{\omega}\}$, then $f$ satisfies the following conditions:

\begin{enumerate} [label=(\roman*)]
\item \label{coro:leaf-twins_1} $\displaystyle t_{u \,k_{\omega}}=t_{\ell \,k_{\omega}}=t_{k_{\ell}\,\omega}=0$. 
\item \label{coro:leaf-twins_2} $\displaystyle t_{k_{\omega}\,\ell}=t_{k_{\omega}\,u}=t_{\omega\,k_{\ell}}^2$. 
\end{enumerate}
\end{corollary}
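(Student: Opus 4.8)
The plan is to derive both statements by specializing the quadratic identity \eqref{eq:propb} to the three leaves $\ell$, $u$ and $\omega$, exploiting two structural facts: each leaf $v$ satisfies $\deg(v)=1$ and has a singleton neighborhood, so any sum indexed by $\mathcal{N}(v)$ collapses to a single term; and since $u\sim_{t}\ell$ we have $\mathcal{N}(u)=\mathcal{N}(\ell)=\{k_\ell\}$, which makes the roles of $\ell$ and $u$ interchangeable throughout. Beyond this, the only extra input I would need is the consequence already recorded in Corollary~\ref{coro:leaf}\,\ref{eq:nr1}.

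I would prove \ref{coro:leaf-twins_2} first, as it sets the pattern. Applying \eqref{eq:propb} with $i=\omega$ and $r=\ell$, the left-hand sum runs over $\mathcal{N}(\ell)=\{k_\ell\}$ and hence equals $t_{\omega\,k_\ell}^2$, while the right-hand sum runs over $\mathcal{N}(\omega)=\{k_\omega\}$ with $\deg(\omega)=1$ and hence equals $t_{k_\omega\,\ell}$; this yields $t_{\omega\,k_\ell}^2=t_{k_\omega\,\ell}$. Repeating the computation with $r=u$ and using $\mathcal{N}(u)=\{k_\ell\}$ gives $t_{\omega\,k_\ell}^2=t_{k_\omega\,u}$, and combining the two identities proves \ref{coro:leaf-twins_2}.

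For \ref{coro:leaf-twins_1} I would instead put $\omega$ in the $r$-slot of \eqref{eq:propb}. With $r=\omega$ and $i=\ell$ the left-hand sum collapses to $t_{\ell\,k_\omega}^2$ and the right-hand side to $t_{k_\ell\,\omega}$, giving $t_{\ell\,k_\omega}^2=t_{k_\ell\,\omega}$; with $r=\omega$ and $i=u$ it gives $t_{u\,k_\omega}^2=t_{k_\ell\,\omega}$. Hence $t_{\ell\,k_\omega}^2=t_{u\,k_\omega}^2$. On the other hand, Corollary~\ref{coro:leaf}\,\ref{eq:nr1} applied to the leaf $\omega$ with the distinct indices $\ell\neq u$ gives $t_{\ell\,k_\omega}\,t_{u\,k_\omega}=0$, so at least one factor vanishes; since their squares coincide, both must vanish, i.e. $t_{\ell\,k_\omega}=t_{u\,k_\omega}=0$. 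Feeding either value back into $t_{\ell\,k_\omega}^2=t_{k_\ell\,\omega}$ then forces $t_{k_\ell\,\omega}=0$, completing \ref{coro:leaf-twins_1}.

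The computations are short, so the step I would flag is directional rather than difficult: \eqref{eq:propb} must be used with a leaf in the $r$-slot to collapse the left sum for \ref{coro:leaf-twins_1}, but with a leaf in the $i$-slot to collapse the right sum for \ref{coro:leaf-twins_2}. The decisive idea is the twin symmetry, which forces $t_{\ell\,k_\omega}$ and $t_{u\,k_\omega}$ to obey the same quadratic relation and thereby upgrades the ``at most one nonzero'' conclusion of Corollary~\ref{coro:leaf} to ``both zero''. I would also note that nothing in the argument requires $k_\omega\neq k_\ell$: in the degenerate case $k_\omega=k_\ell$, where $\omega$, $\ell$, $u$ are mutually twins, every coefficient above is forced to $0$ and both conclusions hold trivially.
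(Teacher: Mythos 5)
Your proposal is correct and follows essentially the same route as the paper: both parts come from the same specializations of \eqref{eq:propb} (with $r=\omega$, $i\in\{\ell,u\}$ for \ref{coro:leaf-twins_1}, and $i=\omega$, $r\in\{\ell,u\}$ for \ref{coro:leaf-twins_2}) combined with Corollary \ref{coro:leaf}\ref{eq:nr1}; the only cosmetic difference is that the paper multiplies the two identities to get $t_{k_{\ell}\,\omega}^2=(t_{\ell\,k_{\omega}}t_{u\,k_{\omega}})^2=0$, whereas you argue that equal squares with vanishing product force both factors to vanish, which is logically the same step.
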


\begin{proof}
    For \ref{coro:leaf-twins_1}, note that from \eqref{eq:propb}, letting $r=\omega$ and $i=\ell$ yields $t_{\ell\, k_{\omega}}^2=t_{k_{\ell}\,\omega}$, and letting $r=\omega$ and $i=u$ yields $t_{u\, k_{\omega}}^2=t_{k_{\ell}\,\omega}$. Consequently $t_{k_{\ell}\,\omega}^2=(t_{\ell\, k_{\omega}}\,t_{u\, k_{\omega}})^2$. However, since $t_{\ell\, k_{\omega}}\,t_{u\, k_{\omega}}=0$, by Corollary \ref{coro:leaf}\ref{eq:nr1}, we conclude that $t_{k_{\ell}\,\omega}=0$, which in turn implies $t_{u\, k_{\omega}}=t_{\ell\, k_{\omega}}=0$.   The statement  \ref{coro:leaf-twins_2} follows from applying \eqref{eq:propb} with $r=u$ and $i=\omega$ and with $r=\ell$ and $i=\omega$, respectively.
\end{proof}

\smallskip
\begin{corollary}\label{coro:column0}
Let $G=(V,E)$ be a finite graph and let $f:\mathcal{A}_{RW}(G) \longrightarrow \mathcal{A}(G)$ be a homomorphism. If there exists $k\in V$ such that $t_{ik}=0$ for any $i\in V$, then $f$ is the null map.
\end{corollary}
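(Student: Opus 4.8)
The plan is to read the hypothesis as saying that the coefficient matrix $(t_{ik})$ has an entirely zero column, and then to show, using only the quadratic identity \eqref{eq:propb}, that the collection of zero columns is closed under passing to neighbours in $G$. Once this closure property is in hand, connectedness of $G$ — a standing assumption throughout the paper — forces every column to be zero, which is exactly the statement that $f$ is the null map. To organize the induction I would introduce the set $Z:=\{k\in V: t_{ik}=0 \text{ for all } i\in V\}$ of indices of zero columns; the hypothesis gives $k\in Z$, so $Z\neq\emptyset$, and the goal becomes $Z=V$.

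The key step I would carry out is the following propagation. Fix any $r\in Z$ and an arbitrary $i\in V$, and apply \eqref{eq:propb} to this pair. Because $r\in Z$, we have $t_{\ell r}=0$ for every $\ell$, so the right-hand side $\sum_{\ell\in\mathcal{N}(i)} t_{\ell r}/\deg(i)$ vanishes, whence $\sum_{k\in\mathcal{N}(r)} t_{ik}^2=0$. Working over $\mathbb{R}$, a vanishing sum of squares forces every summand to vanish, so $t_{ik}=0$ for all $k\in\mathcal{N}(r)$; since $i$ was arbitrary this gives $\mathcal{N}(r)\subseteq Z$. Thus $Z$ is closed under taking neighbourhoods. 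To finish, take any $v\in V$ and a path $k=i_0,i_1,\dots,i_\ell=v$ provided by connectedness; starting from $k\in Z$ and applying the closure property along the path yields $i_1,\dots,i_\ell\in Z$ by induction, so $v\in Z$. Hence $Z=V$, meaning $t_{ik}=0$ for all $i,k$, i.e. $f(e_i)=0$ for every $i$ and $f$ is the null map.

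The step I expect to be the only delicate point is the inference from $\sum_{k\in\mathcal{N}(r)} t_{ik}^2=0$ to $t_{ik}=0$ for each $k$: this is immediate over $\mathbb{R}$ (or any formally real field) but can fail over fields admitting nontrivial sums of zero squares, so I would make the real-scalar setting explicit. Everything else is a clean edge-by-edge propagation, and the connectedness of $G$ does the global work; no case analysis on the structure of the graph is needed.
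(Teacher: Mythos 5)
Your proof is correct and follows essentially the same route as the paper: the paper defines $V_0$ as the set of zero columns and propagates zeros outward through BFS-style layers $V_n:=\mathcal{N}(V_{n-1})\cap\bigl(\bigcup_{i<n}V_i\bigr)^c$ using \eqref{eq:propb}, while you phrase the identical propagation as closure of the zero-column set under neighbourhoods plus connectedness. Your explicit remark that $\sum_{k\in\mathcal{N}(r)}t_{ik}^2=0$ forces each $t_{ik}=0$ only over $\mathbb{R}$ (or a formally real field) is a point the paper uses silently, so flagging it is a small improvement rather than a deviation.
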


\begin{proof}
Let $V_0:=\{ k\in V:t_{ik}=0 \text{ for any }i\in V\}$. By hypothesis $\emptyset \neq V_0 \subseteq V$. Let $V_1:=\mathcal{N}(V_0)\cap V_0^c$. We claim that $t_{ik}=0$ for $i\in V$ and $k\in V_1$. Indeed, for  $k\in V_1$, there is $r\in V_0$ such that $k\in \mathcal{N}(r)$. Since $t_{ir}=0$ for $i\in V$, \eqref{eq:propb} implies $t_{ik}^2=0$ for $i\in V$. In general, define for $n\in \mathbb{N}$ the subset of vertices
$$V_{n}:=\mathcal{N}(V_{n-1})\bigcap \left(\bigcup_{i=0}^{n-1} V_i\right)^c.$$
As $G$ is a finite and connected graph there exists $m\in \mathbb{N}$ such that
$$V=\bigcup_{n=0}^{m}V_n.$$
Moreover, as before, by suitable applications of \eqref{eq:propb} we can conclude that $t_{ik}=0$ for any $i\in V$ and any $k\in V_n$ for $n\in\{0,\ldots,m\}$. Therefore $f=0$.
\end{proof}

\section{The case of some singular graphs} 

\subsection{Caterpillar trees}

The caterpillar trees are trees for which all the vertices are within distance $1$ of a central path. For the sake of simplicity in the exposition we adopt the notation $C_{a_1,a_2,\ldots, a_{n}}$ for a caterpillar tree with a central path of $n$ vertices, and such that the $ith$-vertex of the path is connected with $a_i$ vertices outside the path, for $i\in \{1,\ldots,n\}$. In other words, if $i\in\{2,\ldots,n-1\}$ then the $ith$-vertex of the path has degree $a_{i} + 2$ while the first and the last vertices of the path have degree equal to $a_i+1$ each. See Fig.\ref{FIG:caterpillar} for an illustration of a caterpillar tree.

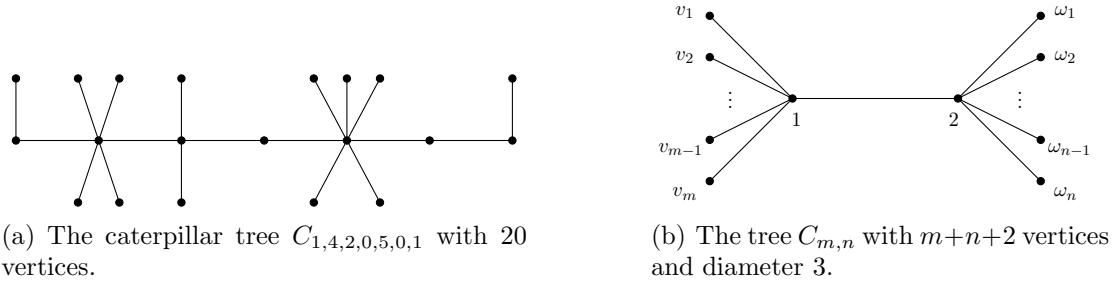
\begin{figure}[!h]
\begin{center}
\subfigure[The caterpillar tree $C_{1,4,2,0,5,0,1}$ with $20$ vertices.]{
\begin{tikzpicture}[scale=0.55, every node/.style={scale=0.55}]

% reta
\draw (-6,0) -- (6,0);
\draw (-4,0) -- (-3.5,1.5);
\draw (-4,0) -- (-3.5,-1.5);
\draw (-4,0) -- (-4.5,1.5);
\draw (-4,0) -- (-4.5,-1.5);

\draw (-2,1.5) -- (-2,-1.5);

\draw (2.8,1.5) -- (1.2,-1.5);
\draw (2.8,-1.5) -- (1.2,1.5);
\draw (2,0) -- (2,1.5);

%\draw (4,0) -- (4,-1.5);

\draw (6,0) -- (6,1.5);
\draw (-6,0) -- (-6,1.5);

% particula

\filldraw [black] (-6,1.5) circle (2.5pt);

\filldraw [black] (0,0) circle (2.5pt);
\filldraw [black] (2,0) circle (2.5pt);
\filldraw [black] (4,0) circle (2.5pt);
\filldraw [black] (6,0) circle (2.5pt);
\filldraw [black] (-2,0) circle (2.5pt);
\filldraw [black] (-4,0) circle (2.5pt);
\filldraw [black] (-6,0) circle (2.5pt);

\filldraw [black] (-3.5,1.5) circle (2.5pt);
\filldraw [black] (-4.5,-1.5) circle (2.5pt);
\filldraw [black] (-3.5,-1.5) circle (2.5pt);
\filldraw [black] (-4.5,1.5) circle (2.5pt);

\filldraw [black] (-2,1.5) circle (2.5pt);
\filldraw [black] (-2,-1.5) circle (2.5pt);

\filldraw [black] (2,1.5) circle (2.5pt);
\filldraw [black] (2.8,1.5) circle (2.5pt);
\filldraw [black] (1.2,1.5) circle (2.5pt);
\filldraw [black] (2.8,-1.5) circle (2.5pt);
\filldraw [black] (1.2,-1.5) circle (2.5pt);

%\filldraw [black] (4,-1.5) circle (2.5pt);

\filldraw [black] (6,1.5) circle (2.5pt);

\end{tikzpicture}
\label{FIG:caterpillar(a)}}\qquad \qquad\subfigure[The tree $C_{m,n}$ with $m+n+2$ vertices and diameter $3$.]{\begin{tikzpicture}[scale=0.55, every node/.style={scale=0.7}]

% edges
\draw (-2,0) -- (2,0);
\draw (-2,0) -- (-4,2);
\draw (-2,0) -- (-4,1);
\draw (-2,0) -- (-4,-2);
\draw (-2,0) -- (-4,-1);

\draw (2,0) -- (4,2);
\draw (2,0) -- (4,1);
\draw (2,0) -- (4,-2);
\draw (2,0) -- (4,-1);
% vertex

\filldraw [black] (-2,0) circle (2.5pt);
\filldraw [black] (2,0) circle (2.5pt);
\filldraw [black] (-4,2) circle (2.5pt);
\filldraw [black] (-4,1) circle (2.5pt);
\filldraw [black] (-4,-1) circle (2.5pt);
\filldraw [black] (-4,-2) circle (2.5pt);

\filldraw [black] (4,2) circle (2.5pt);
\filldraw [black] (4,1) circle (2.5pt);
\filldraw [black] (4,-1) circle (2.5pt);
\filldraw [black] (4,-2) circle (2.5pt);

\draw (-1.9,-0.5) node {\small$1$};
\draw (1.9,-0.5) node {\small$2$};

\draw (-4.6,2.1) node {\small$v_{1}$};
\draw (-4.6,1) node {\small$v_{2}$};
\draw (-3.5,0.1) node {\small$\vdots$};
\draw (-4.7,-1.25) node {\small$v_{m-1}$};
\draw (-4.6,-2.3) node {\small$v_{m}$};

\draw (4.6,2.1) node {\small$\omega_{1}$};
\draw (4.6,1) node {\small$\omega_{2}$};
\draw (3.5,0.1) node {\small$\vdots$};
\draw (4.7,-1.25) node {\small$\omega_{n-1}$};
\draw (4.6,-2.3) node {\small$\omega_{n}$};

\end{tikzpicture}
\label{FIG:caterpillar(b)}}
\caption{Caterpillar trees.
%$C_{a_1,a_2,\ldots, a_{n}}$ is obtained from a central path of $n$ vertices, and assuming that the $ith$-vertex of the path is connected with $a_i$ vertices outside the path, for $i\in \{1,\ldots,n\}$. 
\ref{FIG:caterpillar(a)} A caterpillar tree obtained from a path with $7$ vertices. \ref{FIG:caterpillar(b)} A tree with diameter $3$ can be seen as a caterpillar tree. Its set of vertices can be partitioned as $V=V_p \cup V_1 \cup V_2$ with $V_{p}:=\{1,2\}$ (the vertices of a path), $V_1:=\{v_{1},\ldots,v_{m}\}$ is the subset of those vertices outside the path and connected to vertex $1$, and $V_2:=\{\omega_{1},\ldots,\omega_{n}\}$ is the subset of those vertices outside the path and connected to vertex $2$.}\label{FIG:caterpillar}
\end{center}
\end{figure}
 
It is not difficult to see that this is an example of singular path provided $a_i>1$ for some $i\in\{1,\ldots,n\}$. In particular, $C_{m,n}$ is a tree with $m+n+2$ vertices and diameter $3$, see Fig. \ref{FIG:caterpillar(b)}. This case was considered by \cite[Example 2.7]{PMP1}, where the authors showed that the only homomorphism between $\A(C_{2,2})$ and $\A_{RW}(C_{2,2})$ is the null map. In what follows we prove the general result.

\vspace{0.2cm}
\begin{prop}\label{prop:treediam3}Let $m,n\in \mathbb{N}\setminus \{1\}$ and let $C_{m,n}$ be a tree with $m+n+2$ vertices and diameter $3$. Then $\A(C_{m,n}) \ncong \A_{RW}(C_{m,n})$. Moreover, the only homomorphism between $\A(C_{m,n})$ and $\A_{RW}(C_{m,n})$ is the null map.
\end{prop}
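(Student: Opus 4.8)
The plan is to prove the stronger statement that every homomorphism $f:\mathcal{A}_{RW}(C_{m,n})\to\mathcal{A}(C_{m,n})$, written as in \eqref{eq:homo}, satisfies $t_{ik}=0$ for all $i,k$; the non-isomorphism then follows immediately, since the zero map is never a bijection between nonzero algebras. Label the two path vertices $1,2$, the leaves adjacent to $1$ by $v_1,\dots,v_m$ (so $\mathcal{N}(v_i)=\{1\}$) and those adjacent to $2$ by $\omega_1,\dots,\omega_n$ (so $\mathcal{N}(\omega_j)=\{2\}$); because $m,n\ge 2$, the $v_i$ are pairwise twins and the $\omega_j$ are pairwise twins. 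First I would reduce $(t_{ik})$ to block-diagonal form. Applying Corollary~\ref{coro:leaf}\ref{eq:nr3} to a twin pair among the $v_i$ (resp. the $\omega_j$) kills $t_{v_i1}=t_{1v_i}=0$ (resp. $t_{\omega_j2}=t_{2\omega_j}=0$); applying Corollary~\ref{coro:leaf-twins}\ref{coro:leaf-twins_1} with the twin pair among the $v_i$ and the external leaf among the $\omega_j$ gives $t_{v_i2}=t_{1\omega_j}=0$, and the symmetric choice gives $t_{\omega_j1}=t_{2v_i}=0$. Hence rows $1,2$ are supported on columns $\{1,2\}$ and the leaf rows on the leaf columns, so $(t_{ik})$ splits into a central $2\times2$ block $(t_{ij})_{i,j\in\{1,2\}}$ and four leaf sub-blocks $v\times v$, $v\times\omega$, $\omega\times v$, $\omega\times\omega$.

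Next I would extract two kinds of data. Evaluating \eqref{eq:propb} on the central index pairs yields $t_{12}^2=t_{21}/(m+1)$, $t_{21}^2=t_{12}/(n+1)$, $t_{11}^2=t_{22}/(m+1)$, $t_{22}^2=t_{11}/(n+1)$; eliminating the off-diagonal unknowns shows that $t_{12}\ne0$ forces $t_{12}^3=1/((m+1)^2(n+1))$, and likewise $t_{11}\ne0$ forces $t_{11}^3=1/((m+1)^2(n+1))$. Then, evaluating \eqref{eq:propb} and \eqref{eq:propa} on leaf indices endows each leaf sub-block with a rigid structure. For the $v\times\omega$ block, writing $u_i=(t_{v_i\omega_1},\dots,t_{v_i\omega_n})$, relation \eqref{eq:propb} with $i=v_i,\,r=2$ gives $\|u_i\|^2=t_{12}$, relation \eqref{eq:propa} with $i=v_i,\,j=v_{i'},\,r=2$ gives $\langle u_i,u_{i'}\rangle=0$, and \eqref{eq:propb} with $i=1,\,r=\omega_s$ gives $\sum_i (u_i)_s=(m+1)t_{12}^2$ for every $s$. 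The $v\times v$ block behaves identically with $t_{12}$ replaced by $t_{11}$ and $n$ by $m$, and the remaining two blocks are governed symmetrically by $t_{21}$ and $t_{22}$.

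The decisive step is to compute $\big\|\sum_i u_i\big\|^2$ in two ways. By orthogonality it equals $\sum_i\|u_i\|^2=m\,t_{12}$; by the column-sum data it equals $n\big((m+1)t_{12}^2\big)^2=n(m+1)^2t_{12}^4$. Equating gives $t_{12}\big(n(m+1)^2t_{12}^3-m\big)=0$, so either $t_{12}=0$ or $t_{12}^3=m/(n(m+1)^2)$; comparing the latter with the value $1/((m+1)^2(n+1))$ forced by the central relations yields $m(n+1)=n$, which is impossible for $m\ge2$. Hence $t_{12}=0$, whence $\|u_i\|^2=0$ and, over $\mathbb{R}$, the whole $v\times\omega$ block vanishes; moreover $t_{21}=(m+1)t_{12}^2=0$, so the symmetric computation makes the $\omega\times v$ block vanish as well. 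The identical two-way computation for the $v\times v$ block gives $t_{11}^3=1/(m+1)^2$ if $t_{11}\ne0$, incompatible with $t_{11}^3=1/((m+1)^2(n+1))$ (it would force $n=0$); thus $t_{11}=0$ and the $v\times v$ block vanishes. At this point the entries $t_{1v_1}=t_{2v_1}=0$ together with the vanishing $v\times v$ and $\omega\times v$ blocks make the whole column $v_1$ equal to zero, so Corollary~\ref{coro:column0} gives $f=0$.

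I expect the main obstacle to be that the central $2\times2$ block is \emph{not} pinned down by the central relations alone: those four equations admit nonzero real solutions, so a purely local argument fails. The crux is therefore the coupling between the central entries and the leaf sub-blocks through the prescribed column sums, together with the two-way evaluation of $\big\|\sum_i u_i\big\|^2$, which turns ``equal row-norms $+$ pairwise orthogonality $+$ equal column sums'' into a single numerical identity that $m,n\ge2$ render contradictory. A secondary point to flag is that passing from a vanishing sum of squares to vanishing individual entries, and reading $m(n+1)=n$ as a genuine contradiction, both rely on the ground field being $\mathbb{R}$ (or at least formally real and of characteristic zero).
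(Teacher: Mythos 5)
Your proof is correct and is essentially the paper's own argument: the same block reduction via Corollaries \ref{coro:leaf} and \ref{coro:leaf-twins}, the same central degree relations from \eqref{eq:propb}, and your two-way evaluation of $\bigl\|\sum_i u_i\bigr\|^2$ (squared column sums plus pairwise orthogonality versus summed row norms) is exactly the paper's identity \eqref{eq:Cmn11}, leading to the same numerical incompatibilities ($n=0$ for the diagonal entry, $m(n+1)=n$ versus the paper's mirror-image $n(m+1)=m$ for the off-diagonal one). The only cosmetic difference is at the end: you zero out the leaf column $v_1$, which requires the leaf blocks to vanish and hence the formally-real assumption you flag, whereas the paper zeroes column $1$ directly from $t_{11}=t_{21}=0$ together with the block reduction, so that extra dependency is avoidable.
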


\begin{proof}
    We shall prove by contradiction that the only homomorphism between $\A(C_{m,n})$ and $\A_{RW}(C_{m,n})$ is the null map. In order to do it, we think $C_{m,n}$ as a path with the two vertices $V_{p}:=\{1,2\}$, those vertices outside the path and connected to vertex $1$ as $V_1:=\{v_{1},\ldots,v_{m}\}$, and those vertices outside the path and connected to vertex $2$ as $V_2:=\{\omega_{1},\ldots,\omega_{n}\}$. That is, the set of vertices is partitioned as $V=V_p \cup V_1 \cup V_2$. Now, assume that $f:\A_{RW}(C_{m,n})\longrightarrow \A(C_{m,n})$ is a homomorphism. By Corollary \ref{coro:leaf}\ref{eq:nr1} we have that  
    \begin{equation}\label{eq:Cmn1}
        t_{i1}t_{j1}=t_{i2}t_{j2}=0, \text{ for }i,j\in V\text{ with }i\neq j,
    \end{equation}
and by Corollary \ref{coro:leaf}\ref{eq:nr3} we have 
\begin{equation}\label{eq:Cmn2}
        t_{1 v}=t_{v 1}=t_{2 \omega}=t_{\omega 2}=0, \text{ for }v\in V_1\text{ and }\omega \in V_2.
    \end{equation}
In addition, Corollary \ref{coro:leaf-twins}\ref{coro:leaf-twins_1} implies 
\begin{equation}\label{eq:Cmn3}
        t_{1 \omega}=t_{\omega 1}=t_{2 v }=t_{v 2}=0, \text{ for }v\in V_1\text{ and }\omega \in V_2.
    \end{equation}
Then, \eqref{eq:Cmn2} and \eqref{eq:Cmn3} can be read as
\begin{equation}\label{eq:Cmn4}
        t_{i v}=t_{v i}=0, \text{ for }i\in V_p\text{ and }v \in V_1 \cup V_2.
\end{equation}
By \eqref{eq:propa}, with $r\in V_p$ fixed, and by \eqref{eq:Cmn1}, yields
\begin{equation*}
 \sum_{v\in V_r} t_{i v} t_{j v}=0, \text{ for }i,j\in V\text{ with }i\neq j.
\end{equation*}
In particular, by \eqref{eq:Cmn4} the previous equation is reduced to
\begin{equation}\label{eq:Cmn5}
 \sum_{v\in V_r} t_{u v} t_{\omega v}=0, \text{ for }u,v,\omega\in V_1\cup V_2\text{ with }u\neq \omega.
    \end{equation}

Let $\gamma:V_p\rightarrow V_p$ a function defined by $\gamma(1)=2$ and $\gamma(2)=1$. By \eqref{eq:propb}, with $r,i\in V_p$ we obtain

\begin{equation*}\label{eq:Cmn6}
  t_{i \gamma(r)}^2 + \sum_{v\in V_r} t_{iv}^2 = \frac{1}{\deg(i)}\left(t_{\gamma(i)r}+\sum_{\omega \in V_i}t_{\omega r}\right),  
\end{equation*}
which, in turn, implies by \eqref{eq:Cmn4} that

\begin{equation}\label{eq:Cmn7}
 \deg(i) t_{i \gamma(r)}^2  = t_{\gamma(i)r}, \,\, \text{ for }i,r\in V_p.   
\end{equation}
Now, by letting $i\in V_p$ and $v\in V_k$ in \eqref{eq:propb} we have
$$t_{i k}^2 = \frac{1}{\deg(i)}\left(t_{\gamma(i)v}+\sum_{\omega \in V_i}t_{\omega v}\right).$$
But $t_{\gamma(i)v}=0$ if $i\in V_p$ and $v\in V_1\cup V_2$, by \eqref{eq:Cmn4}. Then, we get: 
\begin{equation}\label{eq:Cmn8}
 \deg(i) t_{i k}^2  = \sum_{\omega \in V_i}t_{\omega v}, \,\, \text{ for }i,k\in V_p\text{ and }v\in V_k.   
\end{equation}

\noindent
By \eqref{eq:Cmn8} we have:

$$\displaystyle \deg(i)^2 t_{i k}^4 =  \left(\sum_{\omega \in V_i}t_{\omega v}\right)\left(\sum_{u \in V_i}t_{u v}\right)= \sum_{\omega \in V_i}t_{\omega v}^2 + \sum_{\omega, u \in V_i \atop \omega \neq u}t_{\omega v}t_{u v}
$$

\noindent
and by summing over $V_k$, we obtain

$$\displaystyle (\deg(k)-1) \deg(i)^2 t_{i k}^4 = \sum_{v\in V_k} \sum_{\omega \in V_i}t_{\omega v}^2 + \sum_{v\in V_k}  \sum_{\omega, u \in V_i \atop \omega \neq u}t_{\omega v}t_{u v}.
$$

\noindent
However, \eqref{eq:Cmn5} implies that the second term of the right side in the equality vanishes. Thus, we get 
\begin{equation}\label{eq:Cmn9}
  \displaystyle (\deg(k)-1)\, \deg(i)^2\, t_{i k}^4 = \sum_{v\in V_k} \sum_{\omega \in V_i}t_{\omega v}^2, \,\, \text{ for }i,k\in V_p.   
\end{equation}

\noindent
By letting $r\in V_p$ and $i=v\in V_k$ for $k\in V_p$ in \eqref{eq:propb} we have $t_{v \gamma(r)}^2 + \sum_{u\in V_r} t_{vu}^2= t_{kr}.$ Since $t_{v\gamma(r)}=0$ if $v\in V_k$ and $r\in V_p$, by \eqref{eq:Cmn4}, we obtain: 
\begin{equation}\label{eq:Cmn10}
 \sum_{u\in V_r} t_{vu}^2= t_{kr}, \,\, \text{ for }r,k\in V_p\text{ and }v\in V_k.   
\end{equation}

\noindent
By summing both sides of \eqref{eq:Cmn10} over $V_k$, we obtain

\begin{equation*}
 (\deg(k)-1) t_{kr} = \sum_{v\in V_k}\sum_{u\in V_r} t_{vu}^2, \,\, \text{ for }r,k\in V_p.
\end{equation*}

\noindent
This, in conjunction to \eqref{eq:Cmn9}, yields

\begin{equation}\label{eq:Cmn11}
\displaystyle (\deg(k)-1)\, \deg(i)^2\, t_{i k}^4 = (\deg(i)-1) t_{ik}, \,\, \text{ for }i,k\in V_p.   
\end{equation}

\smallskip
\noindent
Note that $i=1, r=2$ and $i=2,r=1$ in \eqref{eq:Cmn7} implies, respectively, that $(m+1)t_{11}^2=t_{22}$ and $(n+1)t_{22}^2=t_{11}$. This implies

\begin{equation}\label{eq:Cmn12}
\displaystyle t_{11}=(n+1)(m+1)^2t_{11}^4.   
\end{equation}

\noindent
By considering $i=k=1$ in \eqref{eq:Cmn11} we obtain $(m+1)^2t_{11}^4=t_{11}$. Then, by \eqref{eq:Cmn12}, and assuming $t_{11}\neq 0$, we obtain $n=0$, which is a contradiction because we are considering $n\geq 2$. Therefore
\begin{equation}\label{eq:Cmn13}
t_{11}=0.    
\end{equation}

\noindent
By letting $i=r=1$ and $i=r=2$ in \eqref{eq:Cmn7} we obtain, respectively, $(m+1)t_{12}^2=t_{21}$ and $(n+1)t_{21}^2=t_{12}$. This implies

\begin{equation}\label{eq:Cmn14}
\displaystyle t_{21}=(m+1)(n+1)^2t_{21}^4.   
\end{equation}

\noindent
On the other hand, $i=2,k=1$ in \eqref{eq:Cmn11} implies $t_{21}=(m/n)(n+1)^2t_{21}^4$. Then, by \eqref{eq:Cmn14}, and assuming $t_{21}\neq 0$, we obtain $n(m+1)=m$. That is $m(n-1)+n=0$, which is a contradiction because we are considering $m,n\geq 2$. Therefore
\begin{equation}\label{eq:Cmn15}
t_{21}=0.    
\end{equation}

\noindent
The proof is finished by \eqref{eq:Cmn2}, \eqref{eq:Cmn3}, \eqref{eq:Cmn13}, \eqref{eq:Cmn15} and Corollary \ref{coro:column0}.

\end{proof}

Now, let us consider a general caterpillar tree. 

\vspace{0.2cm}
\begin{prop}\label{prop:caterpillarV1}Let $n\in\mathbb{N}\setminus\{1,2\}$ and let $C_a:=C_{1,a_2,a_3,\ldots, a_{n}}$  be a caterpillar tree with $n +\sum_{k=2}^{n} a_k$ vertices and $a_k> 1$ for $k \in \{2, \ldots, n\}$. Then $\A(C_a) \ncong \A_{RW}(C_a)$. Moreover, the only homomorphism between $\A(C_a)$ and $\A_{RW}(C_a)$ is the null map.
\end{prop}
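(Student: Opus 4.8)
The plan is to argue by contradiction in the spirit of the proof of Proposition~\ref{prop:treediam3}: assuming $f:\A_{RW}(C_a)\to\A(C_a)$ is a homomorphism with coefficient matrix $(t_{ik})$ as in \eqref{eq:homo}, I would show that one whole column of $(t_{ik})$ vanishes, so that Corollary~\ref{coro:column0} forces $f=0$; since an isomorphism is in particular a nonzero homomorphism, this simultaneously yields $\A(C_a)\ncong\A_{RW}(C_a)$. I label the central path $1,2,\dots,n$, write $L_i$ for the set of leaves hanging from the $i$-th path vertex, so that $|L_1|=1$ (with unique element $\lambda$) and $|L_i|=a_i\ge 2$ for $i\ge 2$, and record the degrees $\deg(1)=2$, $\deg(n)=a_n+1$, and $\deg(i)=a_i+2$ for $1<i<n$.

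First I would carry out the leaf cleanup. For each $i\ge 2$ the set $L_i$ consists of at least two mutually twin leaves, so Corollary~\ref{coro:leaf}\ref{eq:nr3} (applied to each $\ell\in L_i$ paired with a twin inside $L_i$) gives $t_{\ell,i}=t_{i,\ell}=0$, and Corollary~\ref{coro:leaf-twins}\ref{coro:leaf-twins_1} (with the twin pair inside $L_i$ and $\omega$ running over the leaves of the remaining path vertices) upgrades this to $t_{\ell,j}=0$ for all $\ell\in L_i$ and all path vertices $j$, and to $t_{i,\omega}=0$ for all $i\ge 2$ and all leaves $\omega$. The lone leaf $\lambda$ of vertex $1$ has no twin, so these corollaries do not annihilate the entries $t_{1,\ell}$, $t_{\lambda,j}$ and $t_{\lambda,\lambda}$; instead Corollary~\ref{coro:leaf-twins}\ref{coro:leaf-twins_2}, applied with a twin pair in $L_i$ ($i\ge 2$) and $\omega=\lambda$, records the identities $t_{1,\ell}=t_{\lambda,i}^2$ for $\ell\in L_i$. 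This isolates vertex $1$ and its leaf as the only source of asymmetry relative to the diameter-$3$ case.

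Next I would extract the path-block relations by feeding these vanishings into \eqref{eq:propa} and \eqref{eq:propb} restricted to pairs of path vertices. For $i\ge 2$ this produces quartic-versus-linear identities among the $t_{ij}$ of the same shape as \eqref{eq:Cmn9}--\eqref{eq:Cmn11}, now indexed along the whole path, in which the decisive factors are $\deg(i)-1=a_i+1\ge 3$ at interior vertices and $\deg(n)-1=a_n\ge 2$ at the far endpoint; exactly as the contradictions $n=0$ and $m(n-1)+n=0$ arose in Proposition~\ref{prop:treediam3}, these bounds should force the corresponding path entries to vanish. The boundary vertex $1$ is degenerate here, since $\deg(1)-1=1$ removes the leverage of the quartic identity, so for it I would instead use the explicit low-degree consequences of $\mathcal{N}(1)=\{2,\lambda\}$ and $\mathcal{N}(\lambda)=\{1\}$ that \eqref{eq:propb} supplies, namely $t_{1,1}^2=\tfrac12 t_{\lambda,\lambda}$, $t_{\lambda,1}^2=t_{1,\lambda}$ and $t_{\lambda,2}^2+t_{\lambda,\lambda}^2=t_{1,1}$, to pin down the first two columns by hand.

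Finally, once the path-path entries are shown to vanish, the recorded identities $t_{1,\ell}=t_{\lambda,i}^2$ together with the relations $\sum_{k\in L_r}t_{\ell,k}^2=t_{i,r}$ that \eqref{eq:propb} yields for $\ell\in L_i$ ($i\ge 2$) and $r$ a path vertex collapse the residual vertex-$1$ and leaf-leaf entries, leaving a fully vanishing column on which Corollary~\ref{coro:column0} applies. I expect the genuine obstacle to be precisely this boundary bookkeeping: because vertex $1$ carries a single, non-twin leaf and has degree $2$, the symmetric quartic identity is unavailable there, and one must show that the vanishing established on the interior of the path propagates all the way down to vertex $1$ and its leaf $\lambda$, while keeping control of the coupling between the path block and the individual $L_i$-blocks along a path of arbitrary length $n$.
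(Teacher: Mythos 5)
Your leaf/twin cleanup is correct and coincides with the paper's opening step, but the core of your plan --- eliminating the path--path entries $t_{ik}$, $i,k\in V_p\setminus\{1\}$, via ``quartic-versus-linear identities of the same shape as \eqref{eq:Cmn9}--\eqref{eq:Cmn11}'' --- has a genuine gap. In Proposition \ref{prop:treediam3} the contradiction is \emph{not} produced by the quartic identity alone: it requires the companion relations \eqref{eq:Cmn7}, $\deg(i)\,t_{i\gamma(r)}^2=t_{\gamma(i)r}$, and these close into the finite system \eqref{eq:Cmn12}, \eqref{eq:Cmn14} only because the central path has exactly two vertices, each of which has the other as its unique path neighbor. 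For $n\ge 3$ the same substitution of \eqref{eq:propb} with $i,r\in V_p$ gives, after cleanup, $t_{i(r-1)}^2+t_{i(r+1)}^2=\frac{1}{\deg(i)}\left(t_{(i-1)r}+t_{(i+1)r}\right)$, a coupled recursion along the path that does not close on any single entry. The quartic identity you can still derive (for $i\ge 3$ it reads $a_k\deg(i)^2\,t_{ik}^4=a_i\,t_{ik}$) only pins the cube of a \emph{nonzero} $t_{ik}$; by itself it never forces $t_{ik}=0$. So your sentence ``these bounds should force the corresponding path entries to vanish'' is exactly the missing argument, not a routine transfer of Proposition \ref{prop:treediam3}. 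Two further holes sit inside the same step: for $i=2$ the identity is contaminated by the surviving entries $t_{1v}$ ($v$ a leaf), which your own cleanup leaves alive, and the column-$1$ entries $t_{j1}$, $j\in V_p\setminus\{1\}$, are covered neither by your quartic step nor by the three vertex-$1$ equations you list (the paper kills them with the trick $t_{v_q^j\,v_1^1}^2=t_{j1}$ for all $q$, combined with the fact that column $v_1^1$ has at most one nonzero entry).

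The paper's proof shows that the whole path-block step is unnecessary, and it inverts your assessment of where the difficulty lies: the vertex you treat as a degenerate boundary nuisance --- vertex $1$ with its lone leaf $v_1^1$ (your $\lambda$) and $\deg(1)=2$ --- is the engine of the contradiction. Since every path vertex carries at least one leaf, Corollary \ref{coro:leaf} makes every path-indexed column have at most one nonzero entry; together with Corollary \ref{coro:leaf-twins} this clears columns $1$ and $v_1^1$ except for their diagonal entries. Assuming $t_{11}\neq 0$, a purely local chain around vertices $1$, $2$, $3$ and $v_1^1$ gives $t_{v_1^1 v_1^1}^2=t_{11}$ and $t_{v_1^1 v_1^1}=2t_{11}^2$, hence $t_{11}=2^{-2/3}$; and also $t_{22}^2=t_{11}/\deg(2)$ and $t_{11}^2=\tfrac12 t_{22}$, whence $\deg(2)=t_{11}/t_{22}^2=1$, contradicting $\deg(2)=a_2+2>2$. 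Then $t_{11}=0$, column $1$ vanishes entirely, and Corollary \ref{coro:column0} finishes. No control of the general path--path block, and no induction along the path of length $n$, is ever needed. To repair your argument you would either have to supply the missing global analysis of the coupled recursion, or, more economically, relocate the contradiction to vertex $1$ as the paper does.
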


\begin{proof}
%We shall demonstrate that the only homomorphism between $\A(C_a)$ and $\A_{RW}(C_a)$ is the null map by means of a contradiction argument. In order to proceed,
We label the vertices of the path as $V_{p}:=\{1,\ldots,n\}$, the vertex connected to $1$ as $v_{1}^1$ and those vertices outside the path and connected to vertex $j$ as $\{v_{1}^j,\ldots,v_{a_j}^{j}\}$, for $j\in \{2,\ldots,n\}$. Let  $f:\A_{RW}(C_a)\longrightarrow \A(C_a)$ be a homomorphism. By Corollary \ref{coro:leaf}\ref{eq:nr1}-\ref{eq:nr2}, we have that  

 \begin{equation}\label{eq:C11}
t_{i\,k}t_{j\,k}= 0, \, \,  \textrm{for} \, \, k \in V_{p},
\end{equation} 
\begin{equation}\label{eq:m1c}
t_{ik}\neq 0, \, \text{ for at most one } \, i\in V, \text{ with} \, k \in V_{p}. %\, \, \textrm{and} \, \,  i \neq j.
\end{equation} 

\noindent
By \eqref{eq:propa}, for $r=1$ and $i \neq j$, we obtain $\displaystyle t_{i\, v_{1}^1}t_{j\, v_{1}^1} + t_{i\, 2}t_{j\, 2} = 0,$ and by  \eqref{eq:C11}, for $k=2$, we get $\displaystyle t_{iv_{1}^1}t_{jv_{1}^1} =0$,  for $i \neq j$ so 

\begin{equation}\label{eq:m1v1}
t_{iv_{1}^1}\neq 0 \text{ for at most one }i\in V.
\end{equation} 

\noindent By Corollary \ref{coro:leaf-twins}\ref{coro:leaf-twins_1}, by letting $\omega=v_{1}^1$, we obtain 

\begin{equation}\label{eq:C13}
t_{v_{q}^j 1}=0 \, \, \textrm{and} \, \, t_{jv_{1}^1}=0, \, \, \textrm{for} \, \,j \in V_{p} \setminus \{1\}  \, \, \textrm{and} \, \, q \in \{1, \ldots, a_{j}\}.\vspace{0.2cm}
\end{equation} 

\noindent
On the other hand, Corollary \ref{coro:leaf-twins}\ref{coro:leaf-twins_2} implies for $i,j\in V_{p}\setminus \{1\}$ and $i \neq j$ that, if $q\in\{1,\ldots,a_j\}$, then $ \displaystyle t_{i\, v_{q}^{j}}=t^2_{v_{1}^i \,j}=t^2_{v_{2}^i j}= \cdots = t^2_{v_{a_{i}}^i j}$ while, if $s\in\{1,\ldots,a_i\}$, then $\displaystyle t^2_{v_{s}^{i}\, j}=t_{i\, v_{1}^{j}}=t_{i\, v_{2}^{j}}= \cdots = t_{i\, v_{a_{j}}^{j}}.$ Thus, \eqref{eq:m1c} implies
\begin{equation}\label{eq:C16}
t_{iv_{q}^j}=0 \, \, \textrm{and} \, \, t_{v_{q}^j i}=0, \, \, \textrm{for} \, \,i,j \in V_{p} \setminus \{1\}
\end{equation}  
so  \eqref{eq:C13} and  \eqref{eq:C16} can be read as
\begin{equation}\label{eq:C17}
t_{v_{q}^j i}=0, \, \, \textrm{for} \, \,i \in V_{p}, j \in V_{p} \setminus \{1\}
\end{equation}
and 
\begin{equation}\label{eq:C18}
t_{i v_{q}^j }=0, \, \, \textrm{for} \, \,i \in V_{p} \setminus \{1\}, j \in V_{p},
\end{equation}
respectively. By \eqref{eq:propb}, for $i=v_{q}^j$, with $j \in V_{p} \setminus \{1\}$, and $r=1$, we have $t^2_{v_{q}^j v_{1}^1}+t^2_{v_{q}^j  2}=t_{j 1}$ for $q \in \{1, \ldots, a_{j}\}$. Note that \eqref{eq:C17} implies $t^2_{v_{q}^j  2}=0$ so we obtain $t^2_{v_{1}^j \, v_{1}^1}= \cdots= t^2_{v_{a_{j}}^j \,v_{1}^1}=t_{j\, 1}$ and by \eqref{eq:m1v1} we conclude
\begin{equation}\label{eq:C21}
t_{v_{q}^j v_{1}^1}= 0 \, \, \textrm{and} \, \,   t_{j 1}=0 \, \, \textrm{for} \, \, j \in V_{p} \setminus \{1\}.
\end{equation} 

\noindent
For $i=r=v_{1}^1$ in \eqref{eq:propb}, $t_{v_{1}^1 1}^2= t_{1 v_{1}^1}$ and  letting $i=2$ and $r=v_{1}^1$ in  \eqref{eq:propb} and using  \eqref{eq:C13} and \eqref{eq:C21}, we obtain $t_{1 v_{1}^1}=0$ and therefore $t_{ v_{1}^1 1 }=0$.  So, by the above, \eqref{eq:C13} and \eqref{eq:C21}, we claim that in columns $1$ and $v_{1}^ 1$ all entries, except $t_{1 1}$ and $t_{v_{1}^1 v_{1}^1 }$, are null, namely:

\begin{equation}\label{eq:C21.1}
 t_{i 1}=0, \, \, \textrm{for} \, \, i \in V \setminus \{1\} \hspace{0.5cm} \textrm{and} \hspace{0.5cm}
t_{j v_{1}^1}= 0,   \, \, \textrm{for} \, \, j \in V \setminus \{v_{1}^1 \}.
\end{equation} 

Note that if $t_{11}=0$, then Corollary \ref{coro:column0} implies that $f=0$. Let us assume that $t_{11}\neq 0$. By letting $i=2$ and $r=1$ in \eqref{eq:propb}, we obtain by  \eqref{eq:C21.1}, $t_{22}^2= (1/\deg(2))t_{11} \neq 0$ and by
\eqref{eq:m1c} we obtain 
\begin{equation}\label{eq:C22}
t_{i2}= 0, \, \, \textrm{for} \, \, i \in V \setminus \{2\}.
\end{equation}  

\noindent
For $i=v_{1}^1$ and $r=1$ in \eqref{eq:propb} and using \eqref{eq:C22}, we obtain that, $t_{v_{1}^1 \, v_{1}^1}^2= t_{11}$, and for $i=1$ and $r=v_{1}^1$ in \eqref{eq:propb} and using \eqref{eq:C21.1}, we obtain that, 
$t_{v_{1}^1 \, v_{1}^1}= 2t_{11}^2.$ Therefore we conclude that,
\begin{equation}\label{eq:C22.1}
t_{11}= 2^{-2/3}.
\end{equation}  
For $i=v_{1}^1$ and $r=v_{q}^2$ for  $q \in \{1, \ldots, a_{2}\}$ in \eqref{eq:propb}, and using \eqref{eq:C22} we obtain that,  

\begin{equation}\label{eq:C23}
t_{1 \, v_{q}^2}= 0, \, \, \textrm{for} \, \, q \in \{1, \ldots, a_{2}\}.
\end{equation}  

\noindent
Letting $i=3$ and $r=2$ in \eqref{eq:propb}, we obtain, 
$$t_{31}^2+t_{33}^2+\sum_{q=1}^{a_{2}}t_{3 \, v_{q_{1}}^2}^2 = \frac{1}{\deg(3)}\left(t_{22}+t_{42}+\sum_{q=1}^{a_3}t_{v_{q}^3 \, 2} \right)$$
and for  \eqref{eq:C16}, \eqref{eq:C21.1}-\eqref{eq:C22}, 
$$t_{33}^2=\frac{1}{\deg(3)}t_{22} \neq 0$$
therefore by \eqref{eq:m1c} we obtain, 
\begin{equation}\label{eq:C24}
t_{i3}= 0, \, \, \textrm{for} \, \, i \in V \setminus\{3\}.
\end{equation}  

\noindent
Now for $i=1$ and $r=2$ in \eqref{eq:propb} we obtain, 
$$t_{11}^2+t_{13}^2+\sum_{q=1}^{a_2}t_{1 \, v_{q}^2}^2 = \frac{1}{2}(t_{v_{1}^1 2} + t_{22} )$$
and for \eqref{eq:C22}, and \eqref{eq:C23}-\eqref{eq:C24}, $t_{11}^2=\frac{1}{2}t_{22}.$
So, by \eqref{eq:C22.1}, $ t_{22}=2 t_{11}^2=2(2^{-2/3})^2=2^{- 1/3}$ and since $t_{11}=\deg(2) t_{22}^2$, then $\deg(2)=t_{11}/t_{22}^2=2^{-2/3}/2^{-2/3}=1,$ which is a contradiction because $\deg(2)=a_2+2 > 2$. Therefore $t_{11}=0$ and $f=0$. 

\end{proof}

%%%%%%%%%%%%%%%%%%%%%%%%%%%%%%%%%%%%%%%%%%%%%%%%%%%%%%%%%%%%%%%%%%%%%%%%%%%%%%%%%%%%%%%%%%%%%%%%%%%%%%%%%%%%%%%%%%% TADPOLE GRAPH
%%%%%%%%%%%%%%%%%%%%%%%%%%%%%%%%%%%%%%%%%%%%%%%%%%%%%%%%%%%%%%%%%%%%%%%%%%%%%%%%%%%%%%%%%%%%%%%

\subsection{Tadpole graphs} 

We consider the $(4,m)$-tadpole graph, with $m\geq 1$, and $m$ odd. See Fig.\ref{FIG:tadpole}.  It should be noted that the $(n,m)$-tadpole graph, with $n\geq 3$, consists of a cycle graph on $n$ vertices and a path graph on $m$ vertices, connected with a bridge. The graph considered in Example \ref{exa:tadpole41} is a $(4,1)$-tadpole graph. We adopt the standard notation $T_{n,m}$, and as it has $n+m$ vertices, we label them as shown in Fig.\ref{FIG:tadpole}. The evolution algebra $\A(T_{4,m})$ has a natural basis $S=\{e_1,\ldots,e_{4+m}\}$,  and the relations between the elements of this basis are given by

\begin{equation*}
\begin{array}{ll}%\displaystyle
e_i^2 = e_{i-1} + e_{i+1},&\text{for  }i \notin  \{1,4,4+m\},\\[.3cm]
e_{1}^2 = e_{2} + e_{4},&\\[.3cm]
e_{4}^2 = e_{1} + e_{3} + e_{5},&\\[.3cm]
e_{4+m}^2 = e_{3+m} , &\\[.3cm]
e_i \cdot e_j =0,&\text{if }i\neq j.
\end{array}
\end{equation*}
\noindent
On the other hand, the evolution algebra $\A_{RW}(T_{4,m})$ has a natural basis $S=\{e_1,\ldots,e_{4+m}\}$, and relations given by
\begin{equation*}
\begin{array}{ll}%\displaystyle

e_i^2 = \frac{1}{2}(e_{i-1} + e_{i+1}),&\text{for  }i  \notin  \{1,4,4+m\},\\[.3cm]
e_{1}^2 = \frac{1}{2}(e_{2} + e_{4}),&\\[.3cm]
e_{4}^2 =\frac{1}{3}(e_{1} + e_{3} + e_{5}),&\\[.3cm]
e_{4+m}^2 = e_{3+m} ,&\\[.3cm]
e_i \cdot e_j =0,&\text{if }i\neq j.

\end{array}
\end{equation*}

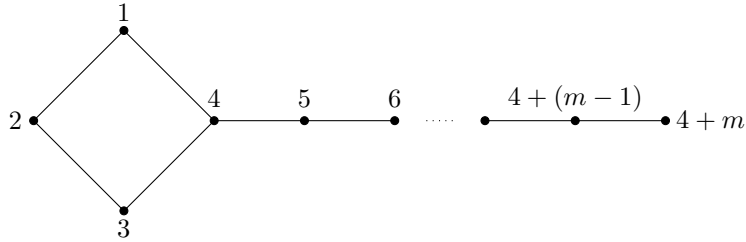
\begin{figure}[!h]
\centering
\begin{tikzpicture}[scale=0.6]

% reta
\draw (0,0) -- (2,0) -- (4,0);
\draw (6,0) -- (8,0);
\draw (8,0) -- (10,0);
\draw[dotted] (4.7,0) -- (5.4,0);
%\draw (-2,0) -- (-2,1.5);
%\draw (-2,0) -- (-2,-1.5);

\draw (0,0) -- (-2,-2);
\draw (0,0) -- (-2,2);

\draw (-2,-2) -- (-4,0);
\draw (-2,2) -- (-4,0);
% particula
\filldraw [black] (-4,0) circle (2.5pt);  \draw (-4.4,0) node {\footnotesize$2$};
\filldraw [black] (0,0) circle (2.5pt);  \draw (0,0.5) node {\footnotesize$4$};
\filldraw [black] (8,0) circle (2.5pt);  \draw (8,0.5) node {\footnotesize$4+(m-1)$};
\filldraw [black] (6,0) circle (2.5pt);
\filldraw [black] (4,0) circle (2.5pt);  \draw (4,0.5) node {\footnotesize$6$};
\filldraw [black] (2,0) circle (2.5pt);  \draw (2,0.5) node {\footnotesize$5$};
\filldraw [black] (10,0) circle (2.5pt);  \draw (11,0) node {\footnotesize$4+m$};
\filldraw [black] (-2,2) circle (2.5pt);  \draw (-2,2.4) node {\footnotesize$1$};
\filldraw [black] (-2,-2) circle (2.5pt);  \draw (-2,-2.4) node {\footnotesize$3$};
\end{tikzpicture}
\caption{$(4,m)$-tadpole graph $T_{4,m}$.} \label{FIG:tadpole}
\end{figure}

\begin{prop}\label{prop:tadpole}
Let $T_{4,m}$ be  the $(4,m)$-tadpole graph with $m\geq 1$ odd. Then the only homomorphism between $\mathcal{A}_{RW}(T_{4,m})$ and $\mathcal{A}(T_{4,m})$ is the null map. Thus $\mathcal{A}_{RW}(T_{4,m})\ncong \mathcal{A}(T_{4,m})$ as evolution algebras.  
\end{prop}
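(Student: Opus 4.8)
The plan is to argue by contradiction: assuming a nonzero homomorphism $f:\A_{RW}(T_{4,m})\longrightarrow \A(T_{4,m})$ exists, I would exploit the two structural features of $T_{4,m}$, namely the unique leaf $4+m$ (with neighbour $3+m$) and the pair of twins $\{1,3\}$ satisfying $\mathcal N(1)=\mathcal N(3)=\{2,4\}$, to eventually produce a column of $(t_{ik})$ that is identically zero; Corollary \ref{coro:column0} then forces $f=0$, and hence $\A_{RW}(T_{4,m})\ncong\A(T_{4,m})$. The main engine is the system \eqref{eq:propa}--\eqref{eq:propb}, supplemented by Corollary \ref{coro:leaf} at the leaf.

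The first and decisive step is a parity argument based on \eqref{eq:propa}. Fix $i\neq j$ and set $a_c:=t_{ic}t_{jc}$. Applying \eqref{eq:propa} at the leaf $r=4+m$ gives $a_{3+m}=0$; subtracting the instances $r=2$ and $r=4$ (whose neighbourhoods are $\{1,3\}$ and $\{1,3,5\}$) isolates $a_5=0$; and applying \eqref{eq:propa} at each interior path vertex $r$ yields the recursion $a_{r-1}+a_{r+1}=0$. Thus the odd path-columns vanish while the even ones satisfy $a_c=\pm a_4$. When $m$ is \emph{odd} the index $3+m$ is even, so $a_{3+m}=\pm a_4$; together with $a_{3+m}=0$ this forces $a_4=0$, and then the instance $r=1$ of \eqref{eq:propa} gives $a_2=0$. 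This is exactly where the oddness of $m$ is indispensable: for even $m$ the leaf-forced vanishing lands on an odd column and the chain never reaches $a_4$. The upshot is that columns $2,4,5,\ldots,4+m$ are \emph{quasi-leaf} (at most one nonzero entry each), leaving only the coupled pair, with $a_1=-a_3$.

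The second step converts this sparsity into a genuine zero column via \eqref{eq:propb}. Here I would use the twin identity $\sum_{k\in\mathcal N(r)}t_{1k}^2=\sum_{k\in\mathcal N(r)}t_{3k}^2$ (immediate from \eqref{eq:propb}, since $1$ and $3$ share the same neighbourhood and degree) together with the degree-three relation at vertex $4$, and then propagate the resulting scalar equations back and forth along the path. As in the proofs of Propositions \ref{prop:treediam3} and \ref{prop:caterpillarV1}, this should yield a polynomial identity in the diagonal entry $t_{11}$ admitting no nonzero solution, forcing some column to vanish and hence, by Corollary \ref{coro:column0}, $f=0$.

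I expect the second step to be the main obstacle. Unlike the shallow diameter-$3$ caterpillar of Proposition \ref{prop:treediam3}, the path here has arbitrary length $m$, so one must carry an inductive invariant through all the path vertices and guarantee that the \eqref{eq:propb}-chain terminates in a truly null column rather than a merely sparse one. Moreover the twins $1,3$ are not leaves, so Corollary \ref{coro:leaf-twins} does not apply and their coupling must be broken by hand through the twin identity above. Tracking the parity-dependent signs accumulated in the first step, and ensuring they feed correctly into the scalar recursion of the second step, is the delicate part of the argument.
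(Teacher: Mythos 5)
Your first step is correct and is exactly how the paper's proof opens: applying \eqref{eq:propa} at the leaf, at $r=2$ versus $r=4$, and at the interior path vertices, and using that $3+m$ is even precisely when $m$ is odd, one obtains $t_{ic}t_{jc}=0$ for all $i\neq j$ and every column $c\notin\{1,3\}$, together with the coupled relation $t_{i1}t_{j1}+t_{i3}t_{j3}=0$. Your identification of where oddness of $m$ enters is also the right one.

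The genuine gap is your second step, which is where essentially all of the work lies and which you leave as a hope rather than an argument. After step 1 you only know that each column outside $\{1,3\}$ has \emph{at most one} nonzero entry; Corollary \ref{coro:column0} needs a column that is \emph{entirely} zero, and nothing in your sketch produces one. In the paper this conversion takes four separate claims, each proved by induction along the path using \eqref{eq:propb} and the twin identity: (1) $t_{ij}=0$ whenever $i,j$ have different parity; (2) $t_{ij}=0$ for $i\le 4<j$; (3) $t_{ij}=0$ for $5\le i<j$; and (4) the diagonal recursion $3t_{44}^2=t_{55}$, $2t_{ii}^2=t_{(i+1)\,(i+1)}$ for $5\le i\le 3+m$. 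The column that finally vanishes is the leaf column $4+m$: either $t_{(4+m)\,(4+m)}=0$, in which case (2) and (3) make that whole column zero and Corollary \ref{coro:column0} finishes; or, by (4), the entire diagonal $t_{44},\ldots,t_{(4+m)\,(4+m)}$ is nonzero, and a case analysis on $t_{22}$ yields contradictions --- in the case $t_{22}\neq 0$ the contradiction is $t_{44}^2=\frac{2}{3}\,t_{44}^2$, which comes precisely from the degree mismatch between vertex $4$ (degree $3$) and vertex $2$ (degree $2$). Note also that your anticipated closing mechanism, a univariate polynomial identity in $t_{11}$ with no nonzero root (as in Propositions \ref{prop:treediam3} and \ref{prop:caterpillarV1}), is not what materializes and is unlikely to: since $1$ and $3$ are non-leaf twins, the equations \eqref{eq:propa}--\eqref{eq:propb} never separate $t_{11}$ from $t_{31}$ (or $t_{13}$ from $t_{33}$); only symmetric combinations such as $t_{11}+t_{31}$ and $t_{11}^2+t_{13}^2$ ever appear, and the paper's contradictions are extracted from relations among $t_{22}$, $t_{44}$ and $t_{55}$, not from $t_{11}$ alone.
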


\begin{proof}
Assume that $f:\mathcal{A}_{RW}(T_{4,m}) \longrightarrow \mathcal{A}(T_{4,m})$ is an algebra homomorphism. We  appeal to a suitable application of the Eq.\eqref{eq:propa}-\eqref{eq:propb} to show that all the images of $f$ are zero. Let us start with some equations resulting from \eqref{eq:propa}, for any $i\neq j$:
\begin{eqnarray}
t_{i\, (r-1)}t_{j\, (r-1)} + t_{i\, (r+1)}t_{j\, (r+1)} = 0, & \text{ for }r\notin\{ 1,4,4+m\},\label{eq:l1}\\[.2cm]
t_{i\, 2}t_{j\, 2} + t_{i\, 4}t_{j\, 4} = 0, & \text{ for }r=1,\label{eq:l2}\\[.2cm]
t_{i\, 1}t_{j\, 1} + t_{i\, 3}t_{j\, 3} +   t_{i\, 5}t_{j\, 5} = 0, & \text{ for }r=4,\label{eq:l3}\\[.2cm]
t_{i\, (3+m)}t_{j\, (3+m)} = 0, & \text{ for }r=4+m.\label{eq:l4}
\end{eqnarray}
Additionally, from \eqref{eq:l1}, with $r=2$, and \eqref{eq:l3} we obtain:
\begin{equation}\label{eq:l5}
    t_{i\, 5}t_{j\, 5} =0, \text{ for }i\neq j.
\end{equation}
Which, together with \eqref{eq:l1} for $r\geq 6$, and $r$ even, allow us to conclude that:
$$    t_{i\, \ell}t_{j\, \ell} =0, \text{ for }i\neq j, \text{ and } \ell \geq 5 \text{ with }\ell \text{ odd. }
$$
Moreover, since $m$ is odd then $3+m$ is even so  \eqref{eq:l1} for $r\geq 5$, $r$ odd, \eqref{eq:l2}, and \eqref{eq:l4}  allows to conclude that:
$$    t_{i\, \ell}t_{j\, \ell} =0, \text{ for }i\neq j, \text{ and } \ell \geq 2 \text{ with }\ell \text{ even. }$$
Thus, what we obtained is that
$ t_{i\, \ell}t_{j\, \ell} =0$, for   $i\neq j$, and $\ell \notin \{1,3\},$ which in turn implies:
\begin{equation}\label{eq:max1}
\text{for each }\ell \notin\{1,3\} \text{ there is at most  }i_{\ell}\text{ such that }t_{i_{\ell} \ell}\neq 0.
\end{equation}

Note that, for $\ell \in\{1,3\}$, \eqref{eq:l3} and \eqref{eq:l5} implies:
\begin{equation}\label{eq:crz}
    t_{i\, 1}t_{j\, 1} + t_{i\, 3}t_{j\, 3}=0,\text{ for }i\neq j.
\end{equation}

Since $1\sim_{t} 3$, then by \eqref{eq:propb}, for $r=1$ and for $r=3$, gives us for any $i$: 
\begin{equation}\label{eq:l6}
    \sum_{\ell\in \mathcal{N}(i)} t_{\ell 1} = \sum_{\ell\in \mathcal{N}(i)} t_{\ell 3}.
\end{equation}
Now, using \eqref{eq:propb} with $i=1$ and $i=3$  we have that
\begin{equation}\label{eq:l7}
    \sum_{k\in \mathcal{N}(r)} t_{1k}^2 = \sum_{k\in \mathcal{N}(r)} t_{3k}^2= \frac{1}{2}(t_{2 r}+t_{4 r}) , \,\, \text{ for any } r.
\end{equation}
In  the equation before, if $r\not \in \{ 1,4,4+m\}$, then:
\begin{equation}\label{eq:l8}
    t_{1\, (r-1)}^2 + t_{1\, (r+1)}^2  = t_{3\, (r-1)}^2 + t_{3\, (r+1)}^2. 
\end{equation}
We will prove now four claims.

\vspace{0.2cm}
\noindent
{\bf \underline{Claim 1}:} If $i$ and $j$ have different parity then $t_{ij}=0$.

In fact, by letting $r=4+m$ in \eqref{eq:l7}, we get that $t_{1\, (3+m)}^2 = t_{3\, (3+m)}^2$, that implies  that $t_{1\, (3+m)} \not =0$ if and only if $t_{3\, (3+m)} \not =0$.  Then by \eqref{eq:max1} we can concluded that $t_{1\, (3+m)} = t_{3\, (3+m)}=0$. Now, by letting $r= 2+m$ in \eqref{eq:l7}, we have that $$t_{1\, (1+m)}^2 + t_{1\, (3+m)}^2  = t_{3\, (1+m)}^2 + t_{3\, (3+m)}^2.$$ Then $t_{1\, (1+m)}^2  = t_{3\, (1+m)}^2$ and therefore $t_{1\, (1+m)} = t_{3\, (1+m)}=0$. Repeating the same argument with $r$ odd
we obtain that
\begin{equation} \label{eq:ind1} t_{1j}=t_{3j}=0, \, \textrm{for} \, j \, \textrm{even}.
\end{equation}

The last equation together with \eqref{eq:l7} implies that  $t_{2\,j} +t_{4\,j}=0, \, \textrm{for} \, j \, \textrm{odd}$. Then  by \eqref{eq:max1} we have 
$t_{2\, j}=t_{4\, j}=0,$  for $j$ odd and   $j \not \in\{ 1,3\}.$  Now taking $i=r=2$ in \eqref{eq:propb} we get 
 $$t_{2\, 1}^2 + t_{2\, 3}^2  = \frac{1}{2}(t_{1\, 2} + t_{3\, 2})=0.$$ 
Therefore, using \eqref{eq:max1}, we have  
$t_{2\, 1}=t_{2\, 3}=0$ and by \eqref{eq:l7} with $r=1,3$ we have $t_{4\, 1}=t_{4\, 3}=0$. Thus
\begin{equation}\label{eq:ind2} t_{2\, j}=t_{4\, j}=0, \textrm{ for }   j  \textrm{ odd.} \end{equation}
We note that if $r$ is even then all elements in $\mathcal{N}(r)$ are odd, then by letting $i=4$ in \eqref{eq:propb} and using  \eqref{eq:ind2} we obtain 
\begin{equation}\label{ind3}
    0=\sum_{k\in \mathcal{N}(r)} t_{4k}^2 = \frac{1}{3}(t_{1 r}+t_{3 r} +t_{5 r})= t_{5 r}, \, \, \textrm{ for } \,  r \, \textrm{ even}.
\end{equation}
Now by letting $i=5$ and  $r$ odd  in \eqref{eq:propb} and using  \eqref{eq:ind2} we have that
\begin{equation}\nonumber 
    \sum_{k\in \mathcal{N}(r)} t_{5k}^2 = \frac{1}{2}(t_{4r}+t_{6 r})=\frac{1}{2}t_{6 r}.
\end{equation}
But if $r$ is odd all elements in $\mathcal{N}(r)$ are even, then using \eqref{ind3}  we obtain
\begin{equation}\label{ind4}
 t_{6j}=0  \, \textrm{ for } j  \, \textrm{ odd}.
\end{equation}
Continuing with this process we can conclude that $t_{ij}=0 $ whenever $i$ and $j$ have different parity.

\vspace{0.2cm}

{\bf \noindent \underline{Claim 2}:} If $i\in \{1,2,3, 4\}$ and $j\in \{5,\ldots,4+m\}$, then $t_{ij}=0$.

Using  Claim 1 it remains to prove that $t_{ij}=0$ if $i$ and $j$ have the same parity. From \eqref{eq:l7} with $r=2$ and $r=4$ we obtain that
$$t_{1\, 1}^2 + t_{1\, 3}^2  = t_{3\, 1}^2 + t_{3\, 3}^2\,\, \textrm{ and } \,\, t_{1\, 1}^2 + t_{1\, 3}^2 + t_{1\, 5}^2  = t_{3\, 1}^2 + t_{3\, 3}^2 + t_{3\, 5}^2,$$
respectively. Therefore $t^2_{15}=t^2_{35}$ and then, by \eqref{eq:max1}, we have that $t_{15}=t_{35}=0.$ Now, using \eqref{eq:l8} with $r=6$, we have $t_{1\, 5}^2 + t_{1\, 7}^2  = t_{3\, 5}^2 + t_{3\, 7}^2$, and therefore $t_{1\,7}=t_{3\,7}=0$. We continue using \eqref{eq:l8}, for all possible values of $r$ even until we get that $t_{1\,j}=t_{3\,j}=0$, and  for $j$ odd $$t_{1\, j}=t_{3\, j}=0, \, \textrm{for}\, \, j \neq 1,3.$$
The above in \eqref{eq:l7} implies that $t_{2\, r} + t_{4\, r}=0$ for $r \neq 2,4$ and, by 
 \eqref{eq:max1}, that 
 $$t_{2\, k}=t_{4\, k}=0, \, \textrm{for} \, k \neq 2,4.$$
\vspace{0.2cm}
\noindent 
{\bf \underline{Claim 3}:} If $i\in \{5,\ldots,4+m\}$ and $j>i$, then $t_{ij}=0$.
 
If $i=4$ in \eqref{eq:propb}, then $$\sum_{k\in \mathcal{N}(r)} t_{4k}^2= \frac{1}{3}(t_{1\, r} + t_{3\, r} + t_{5\, r}).$$
We note that if  $r \neq 1,3,5$, then $k \neq 2,4$ and, by Claim 2, $t_{1r}=t_{3r}=t_{4k}=0$. Therefore
\begin{equation}\label{eq:l9}
t_{5\, j}=0, \, \textrm{for} \, j \neq 1,3,5. \end{equation}
Now from \eqref{eq:propb} with $i>4$ we obtain
\begin{equation}\label{eq:l10}
\sum_{k\in \mathcal{N}(r)} t_{ik}^2= \frac{1}{2}(t_{(i-1)\, r} + t_{(i+1)\, r}).
\end{equation}
If $i=5$ and $r>6$  in \eqref{eq:l10} then 
$$0=\sum_{k\in \mathcal{N}(r)} t_{5k}^2= \frac{1}{2}(t_{4\, r} + t_{6\, r})=t_{6\, r}, $$
where we use {\bf Claim 2} and  \eqref{eq:l9}. We continue the process in an analogous way by increasing the value of $i$ in \eqref{eq:l10}. 

\vspace{0.2cm}
{\bf \noindent \underline{Claim 4}:} $3t_{44}^2= t_{55}$ and  $2t_{ii}^2= t_{(i+1) \, (i+1)}$, for $5 \leq i \leq 3+m$.

If $i=4$ and $r=5$ in \eqref{eq:propb}, then $ t_{44}^2 + t_{46}^2= \frac{1}{3}(t_{15} + t_{35}+t_{55} ),$
but $t_{46}=t_{15}=t_{35}=0$ for claim 2, therefore  $3t_{44}^2= t_{55}.$
If $i=3+m$ and $r=4+m$ in \eqref{eq:propb}, then we have $ t_{(3+m) \, (3+m)}^2= \frac{1}{2}(t_{(2+m)\, (4+m)} + t_{(4+m)\, (4+m)} ),$
but $t_{(2+m)\, (4+m)}=0$ for {\bf Claim 3}, therefore  $2t_{(3+m) \, (3+m)}^2= t_{(4+m) \, (4+m)}$. 
For $5 \leq i \leq 2+m$ and $r=i+1$ in \eqref{eq:propb}:
$$ t_{ii}^2 + t_{i \, (i+2)}^2= \frac{1}{2}(t_{(i-1)\, (i+1)} + t_{(i+1)\, (i+1)} ),$$
and since $t_{i\, (i+2)}=t_{(i-1)\, (i+1)}=0$  for Claim 2 and {\bf Claim 3}, then:
$$ 3t_{44}^2= t_{55} \, \, \textrm{and} \, \,  2t_{ii}^2= t_{(i+1) \, (i+1)} \, \,  \textrm{for} \, \,  5 \leq i \leq 3+m.$$ 
\vspace{0.2cm}

\noindent
Using all of the above, we will now show that $f$ is the null homomorphism, for this we will focus on the term $t_{(4+m) \, (4+m)}$. Note that if $t_{(4+m) \, (4+m)}=0$, then by {\bf Claim 2} and {\bf Claim 3},  $t_{i \, (4+m)}=0$ for all $i\in V$, so Corollary \ref{coro:column0} implies that $f$ is the null map.

\vspace{0.2cm}

Let us prove  that $t_{(4+m) \, (4+m)} = 0$. In fact,  if $t_{(4+m) \, (4+m)} \neq 0$, then by {\bf Claim 4}, $t_{ii} \neq 0$ for all $4 \leq i \leq 4+m.$ In particular $t_{44} \neq 0$ and therefore, by \eqref{eq:max1}, $t_{24}=0$. Letting $i=2$ in \eqref{eq:l6} and letting $i=2$ and $r=3$ in \eqref{eq:propb}, we obtain:
\begin{equation}\label{eq:t13} t_{11} +t_{31} = t_{1\, 3} + t_{33} = 2t_{22}^2.
\end{equation}
\noindent
Now letting $r=2$ and $r=4$ in  \eqref{eq:l7} and using  {\bf Claim 2}, we obtain:
\begin{equation}\label{eq:ct13}  \frac{1}{2}(t_{2\,2}+ t_{4\,2}) = t_{1\,1}^2 + t_{13}^2=t_{3\,1}^2 + t_{33}^2= \frac{1}{2}t_{4\,4} \neq 0.
\end{equation}
If $t_{22}=0$, then by \eqref{eq:t13}, $t_{1\,1}+ t_{3\,1}=t_{1\,3}+ t_{3\,3}=0$ and therefore
$$0=(t_{1\,3}+ t_{3\,3})^2 +(t_{1\,1}+ t_{3\,1})^2=t_{1 \, 3}^2+ t_{3\,3}^2 +t_{1\,1}^2+ t_{3\,1}^2 + 2(t_{1\,3} t_{3\,3} +t_{1\,1} t_{3\,1}).$$
We conclude by \eqref{eq:crz} that $t_{1\,3}=t_{3\,3}=t_{1\,1}=t_{3\,1}=0$, what contradicts \eqref{eq:ct13}. 

\noindent
If $t_{22}\neq 0$, then $t_{42}=t_{62}=0$ and by \eqref{eq:ct13}, $t_{22}=t_{44}$. 
Letting $i=5$ and $r=2$ in \eqref{eq:propb} we obtain 
$t_{5\, 1}^2 + t_{53}^2= \frac{1}{2}(t_{4 \, 2}+t_{62})=0$, namely, $t_{51}=t_{53}=0$. Letting $i=4$ and $r=3$ in \eqref{eq:propb} and using \eqref{eq:t13} we obtain that, 
$t_{4\, 4}^2 = \frac{1}{3}(t_{1 \, 3}+ t_{3 \, 3}+t_{5\, 3})=\frac{1}{3}(t_{1 \, 3}+ t_{3 \, 3})=\frac{2}{3}t_{22}^2=\frac{2}{3}t_{44}^2$, what would take us to $t_{44}=0$ and we get a contradiction showing that this case is not possible. 

\end{proof}

\subsection{The bull graph: a twin-free singular graph}
In Propositions \ref{prop:caterpillarV1} and \ref{prop:tadpole} we examine families of singular graphs that contain twins between their vertices. We consider the graph $C_{m,n}$ with $m,n\in \mathbb{N}\setminus\{1\}$, and the graph $C_{1,a_2,a_3,\ldots, a_{n}}$ with $n\in \mathbb{N}\setminus\{1\}$ and $a_i>1$ for $i\in\{2,\ldots,n\}$.  Additionally, we consider the graph $T_{4,m}$ for $m\in \mathbb{N}$ odd. This helps to facilitate our analysis, which aims to prove that the only homomorphism between the respective evolution algebras is the null map. In what follows, we analyse a singular twin-free graph. In the set of twin-free graphs, it has been demonstrated that two different behaviors can be observed in terms of the existence of isomorphisms between the corresponding evolution algebras, provided that the graph is singular.  The cycle graph $C_{n}$ with $n=4k$ for $k \geq 2$ is an example of a singular twin-free graph. Since it is a $2$-regular graph, we know that $\A_{RW}(C_{n})\cong \A(C_{n})$ (see \cite[Theorem 3.2(i)]{PMP} and \cite[Proposition  2.9]{PMP1}). In contrast, the path graph $P_{n}$ with $n \geq 5$ odd represents another example of singular twin-free graph. However,  it has been demonstrated that $\A_{RW}(P_{n}) \not\cong \A(P_{n})$ and the only homomorphism between these algebras is the null map (see \cite[Proposition 3.3]{PMP1}).

\begin{example}
Consider the bull graph $B$, which is a graph with five vertices and five edges. It is represented by a triangle with two disjoint pendant edges. See Fig.\ref{FIG:twin-free-singular}. 

\begin{figure}[!h]
\centering

\begin{tikzpicture}
    \node at (0,0){
\begin{tikzpicture}[scale=0.6]

% reta

\draw (0,0) -- (-2,-2.5);
\draw (0,0) -- (1,2.5);
\draw (-4,0) -- (0,0);
\draw (-2,-2.5) -- (-4,0);
\draw (-5,2.5) -- (-4,0);
% particula
\filldraw [black] (-4,0) circle (2.5pt);  \draw (-4.4,0) node {\footnotesize$3$};
\filldraw [black] (0,0) circle (2.5pt);  \draw (0.4,0) node {\footnotesize$4$};
\filldraw [black] (-5,2.5) circle (2.5pt);  \draw (-5,3) node {\footnotesize$1$};
\draw (-2,0.7) node {\footnotesize $B$};
\filldraw [black] (1,2.5) circle (2.5pt);  \draw (1,3) node {\footnotesize$2$};
\filldraw [black] (-2,-2.5) circle (2.5pt);  \draw (-2,-3) node {\footnotesize$5$};
\end{tikzpicture}};
\node at (7,0){
\begin{tikzpicture}
\node at (0,0) {$A(B)=\displaystyle
\begin{bmatrix}
0 & 0 & 1 & 0 & 0\\[.2cm]
0 & 0 & 0 & 1 & 0\\[.2cm]
1 & 0 & 0 & 1 & 1\\[.2cm]
0 & 1 & 1 & 0 & 1\\[.2cm]
0 & 0 & 1 & 1 & 0
\end{bmatrix}.$};
\end{tikzpicture}
};
\draw[gray,thin] (3.5,-1) -- (3.5,1);
\end{tikzpicture}
\caption{The bull graph.} \label{FIG:twin-free-singular}
\end{figure}
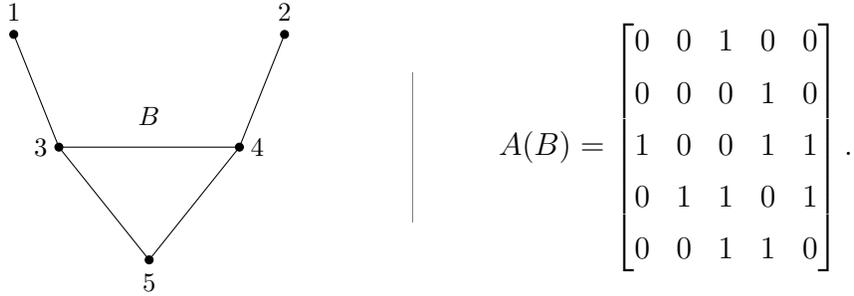

$B$ is a twin-free singular graph and is associated with the following evolution algebras:

\begin{equation*}
\begin{array}{ccc}
\A(B)=\left\{
\begin{array}{l}%\displaystyle
e_{1}^2 = e_{3},\\[.3cm]
e_{2}^2 = e_{4},\\[.3cm]
e_{3}^2 = e_{1}+e_{4}+e_{5},\\[.3cm]
e_{4}^2 = e_{2}+e_{3}+e_{5},\\[.3cm]
e_{5}^2 = e_{3}+e_{4}, \\[.3cm]
e_i \cdot e_j =0,\text{ if }i\neq j.
\end{array}\right.
&&
\A_{RW}(B)=\left\{
\begin{array}{l}%\displaystyle
e_{1}^2 = e_{3},\\[.3cm]
e_{2}^2 = e_{4},\\[.3cm]
e_{3}^2 = \frac{1}{3}(e_{1}+e_{4}+e_{5}),\\[.3cm]
e_{4}^2 = \frac{1}{3}(e_{2}+e_{3}+e_{5}),\\[.3cm]
e_{5}^2 = \frac{1}{2}(e_{3}+e_{4}), \\[.3cm]
e_i \cdot e_j =0,\text{ if }i\neq j.
\end{array}\right.
\end{array}
\end{equation*}

Clearly, $B$ is a singular graph.  Indeed, the last row of $A(B)$ is the sum of the first and second rows, which is the translation of $e_1^2+e_2^2=e_5^2$. Let us demonstrate that if $f:\A_{RW}(B)\longrightarrow \A(B)$ is a homomorphism, then $f=0$. In fact, is $f$ is a homomorphism then Corollary \ref{coro:leaf}\ref{eq:nr1}-\ref{eq:nr2} implies
\vspace{0.2cm} 
\begin{equation}\label{eq:B1}
t_{i\,k}t_{j\,k}= 0, \, \,  \textrm{for}\,\, i,j\in V\,\,i\neq j, \text{ and }\, \, k \in \{3,4\},
\end{equation}  \vspace{0.2cm} 
and
\begin{equation}\label{eq:B2}
t_{ik}\neq 0, \, \text{ for at most one } \, i\in V, \text{ and } \, k \in \{3,4\}. 
\end{equation}  

\vspace{0.2cm}
\noindent
By \eqref{eq:propb}, for $r=i=1$, we have $t_{13}^2=t_{31}$, and for $r=1$ and $i=2$, we have $t_{23}^2=t_{41}$. If we let $r=1$ and $i=5$ in \eqref{eq:propb} then we get $2t_{53}^2=t_{31}+t_{41}=t_{13}^2 + t_{23}^2$ which in turn, by \eqref{eq:B2}, implies
\begin{equation}\label{eq:B3}
t_{13}=t_{23}=t_{53}=0 \text{ so }t_{31}=t_{41}=0.
\end{equation}

\noindent
In a similar way, we obtain by considering $r=2$ and $i=1,2,5$ the equations $t_{14}^2=t_{32}$, $t_{24}^2=t_{42}$ and $2t_{54}^2=t_{32}+t_{42}=t_{14}^2+t_{24}^2$, respectively. As before, these equations imply, by \eqref{eq:B1} that:
\begin{equation}\label{eq:B4}
    t_{54}=t_{14}=t_{24}=0 \text{ so }t_{32}=t_{42}=0.
\end{equation}

\noindent
By letting $r=5$ and $i=1,2$ in \eqref{eq:propb} we obtain $t_{13}^2+t_{14}^2=t_{35}$ and $t_{23}^2+t_{24}^2=t_{45}$, respectively, which with \eqref{eq:B3}-\eqref{eq:B4} implies:
\begin{equation}\label{eq:B5}
    t_{35}=t_{45}=0.
\end{equation}

\noindent
If we consider $r=3$, $i=4$ and $r=4$, $i=3$ in \eqref{eq:propb}, together with \eqref{eq:B3}-\eqref{eq:B5}, then we can obtain:
\begin{equation}\label{eq:B6}
    3\,t_{44}^2=t_{33},\,\,\text{ and 
  }\,\,3\,t_{33}^2=t_{44}.
\end{equation}

\noindent
We will prove that $t_{33}=t_{43}=0$ which implies, by \eqref{eq:B3} and Corollary \ref{coro:column0} that $f$ is the null map. We prove it by contradiction so let us assume that $t_{33}\neq 0$ or $t_{43}\neq 0$. 

\smallskip
\noindent
{\bf \underline{Case 1}:} $t_{33}\neq 0$. In this case, \eqref{eq:B6} implies that $t_{33} =1/3$. Moreover, $t_{44} \neq 0$ so by \eqref{eq:B1} it should be:
\begin{equation}\label{eq:B7}
    t_{34}=t_{43}=0.
\end{equation}

\noindent
By letting $r=4,$ $i=1$ and $r=3,$ $i=2$  in \eqref{eq:propb} we get, respectively, $t_{12}^2+t_{13}^2+t_{15}^2=t_{34}$ and $t_{21}^2+t_{24}^2+t_{25}^2=t_{43}$, which together with \eqref{eq:B7} implies:
\begin{equation}\label{eq:B8}
    t_{12}=t_{15}=t_{21}=t_{25}=0.
\end{equation}
By letting $r=1,$ $i=4$ in \eqref{eq:propb} we get $3t_{43}^2=t_{21}+t_{31}+t_{51}$, which together with \eqref{eq:B3}, \eqref{eq:B7} and \eqref{eq:B8} implies:
\begin{equation}\label{eq:B9}
    t_{51}=0.
\end{equation}

\noindent
On the other hand, by considering $r=3,$ $i=1$ and $r=1,$ $i=3$ in \eqref{eq:propb} we obtain, respectively, $t_{11}^2+t_{14}^2+t_{15}^2=t_{33}$ and $3t_{33}^2=t_{11}+t_{41}+t_{51}$, which together with \eqref{eq:B3}, \eqref{eq:B4}, \eqref{eq:B8} and \eqref{eq:B9} implies $t_{11}^2=t_{33}$ and $t_{11}=3t_{33}^2$. This in turn implies that $t_{33}^2=1/9$ which is a contradiction because we already showed that it should be $t_{33}=1/3$. Therefore we get,
\begin{equation}\label{eq:B10}
    t_{33}=t_{44}=0.
\end{equation}

\smallskip

{\bf \noindent \underline{Case 2}:} $t_{43}\neq 0$. By letting $r=i=3$ and $r=i=4$ in \eqref{eq:propb} we get, respectively, $3(t_{31}^2+t_{34}^2+t_{35}^2)=t_{13}+t_{43}+t_{53}$ and $3(t_{42}^2+t_{43}^2+t_{45}^2)=t_{24}+t_{34}+t_{54}$, which together with \eqref{eq:B3}-\eqref{eq:B4} imply, respectively, that $3 t_{34}^2=t_{43}$ and $3t_{43}^2=t_{34}$. Thus $t_{34}\neq 0$ and $t_{43}=1/3$. Moreover $t_{34}=1/3$. Now, by the equations resulting from $i=1$, $r=3$ and $i=3$ and $r=1$, together with \eqref{eq:B10} we conclude that
\begin{equation}\label{eq:B11}
    t_{11}=t_{15}=t_{51}=0.
\end{equation}

\noindent
By assuming $r=3$ and $i=5$ in \eqref{eq:propb}, together with \eqref{eq:B4}, \eqref{eq:B10} and \eqref{eq:B11}, we get $2t_{55}^2=t_{43}$. Since $t_{43}=1/3$ it should be $t_{55}=1/\sqrt{6}$. By letting $r=5$ and $i=3$ in \eqref{eq:propb}, together with \eqref{eq:B5}, \eqref{eq:B10} and \eqref{eq:B11}, we get $3t_{34}^2=t_{55}$ but $t_{34}=1/3$ so $t_{55}=1/3$, which is a contradiction. Therefore $t_{43}=0$ and the proof is completed.   
\end{example}

\section*{Acknowledgements}

A portion of this research was conducted during a visit by S. Vidal and M. L. Rodi\~no Montoya to the Universidade Federal de Pernambuco (UFPE). The authors extend their gratitude to this institution for its hospitality and support. This work was partially by the Conselho Nacional de Desenvolvimento Científico e Tecnológico - CNPq (Grant 316121/2023-1), the Funda\c{c}\~ao de Amparo \`a Ci\^encia e Tecnologia do Estado de Pernambuco - FACEPE (Grant APQ-1341-1.02/22) and the Funda\c{c}\~ao de Amparo \`a Pesquisa do Estado de S\~ao Paulo - FAPESP (Grants 2017/10555-0, 2022/08948-2).

\end{document}